\newif\ifshowcomments
    \newcommand\sam[1]{\textcolor{green}{\textbf{ S: #1}}}
    \newcommand\luca[1]{\textcolor{red}{\textbf{ L: #1}}}
    \newcommand\lena[1]{\textcolor{purple}{\textbf{ Le: #1}}}
    \newcommand\stefan[1]{\textcolor{blue}{\textbf{ St: #1}}}
    \newcommand\sam[1]{}
    \newcommand\luca[1]{}
    \newcommand\lena[1]{}
    \newcommand\stefan[1]{}
\newcommand\restr[2]{{%
  \left.\kern-\nulldelimiterspace %
  #1 %
  \vphantom{\big|} %
  \right|_{#2} %
  }}
\DeclareMathOperator{\diag}{diag}
\DeclareMathOperator{\ocp}{ocp}
\title{Total Matching and Subdeterminants}
\author{Luca Ferrarini\inst{1}\orcidID{0000-0002-8903-2871} \and Samuel Fiorini\inst{2}\orcidID{0000-0002-6845-9008} \and Stefan Kober\inst{2}\orcidID{0000-0003-2610-1494} \and Yelena Yuditsky\inst{2}\orcidID{0000-0002-6467-3437}}
\institute{\'Ecole des Ponts ParisTech \email{luca.ferrarini@enpc.fr} \and Universit\'e libre de Bruxelles \email{\{Samuel.Fiorini,Stefan.Kober,Yelena.Yuditsky\}@ulb.be}}
\date{\today}
\begin{document}

\maketitle

\begin{abstract}
In the total matching problem, one is given a graph $G$ with weights on the vertices and edges. The goal is to find a maximum weight set of vertices and edges that is the non-incident union of a stable set and a matching.
We consider the natural formulation of the problem as an integer program (IP), with variables corresponding to vertices and edges. Let $M = M(G)$ denote the constraint matrix of this IP. We define $\Delta(G)$ as the maximum absolute value of the determinant of a square submatrix of $M$.
We show that the total matching problem can be solved in strongly polynomial time provided $\Delta(G) \leq \Delta$ for some constant $\Delta \in \mathbb{Z}_{\ge 1}$. We also show that the problem of computing $\Delta(G)$ admits an FPT algorithm. We also establish further results on $\Delta(G)$ when $G$ is a forest.
\end{abstract}

\section{Introduction}
Let $G = (V(G),E(G))$ be a simple graph\footnote{We can assume that $G$ is simple without loss of generality.}. Given weights $w(v) \in \mathbb{R}$ for each vertex $v $ and $w(e) \in \mathbb{R}$ for each edge $e$, the natural integer programming formulation of the total matching problem is as follows\footnote{We use $\delta(v)$ to denote the set of edges incident to vertex $v$.}:
$$
\begin{array}{rl}
\max & \sum_{v \in V(G)} w(v) x_v + \sum_{e \in E(G)} w(e) y_e\\
\text{s.t.} & x_v + \sum_{e \in \delta(v)} y_e \leq 1 \quad \forall v \in V(G)\\
& x_v + x_w + y_e \leq 1 \quad \forall e = vw \in E(G)\\
& x \geq \mathbf{0},\ y \geq \mathbf{0}\\
& x, y \text{ integer.}
\end{array}
$$

We denote by $M(G)$ the constraint matrix of the above integer program, ignoring the nonnegativity constraints. That is, we let
$
M(G) := \begin{bmatrix}
I & B\\
B^\intercal & I\\
\end{bmatrix}
$,
where $B = B(G) \in \{0,1\}^{V(G) \times E(G)}$ denotes the (vertex-edge) incidence matrix of $G$ and $I$ denotes the identity matrix of appropriate size.

We say that two elements (vertices or edges) $\alpha, \beta \in V(G) \cup E(G)$ are \emph{incident} if $\alpha = \beta$ or if one is an edge and the other is an endpoint of that edge. 
Notice that $M(G)$ is simply the matrix of that incidence relation on $V(G) \cup E(G)$.

We let $\Delta(G)$ denote the maximum subdeterminant of $M(G)$, that is, the maximum of $|\det M'|$ taken over all square submatrices $M'$ of $M(G)$. 

The study of the total matching problem has been initiated by Alavi, Bezhad, Lesniak-Foster, and Nordhaus~\cite{alavi_1977}. 
As a generalization of the stable set problem, it is clearly NP-complete. 
In fact, the problem is NP-hard already for bipartite and planar graphs~\cite{manlove_1999}.
However efficient algorithms are only known for very structured classes of graphs, such as trees, cycles, or complete graphs~\cite{manlove_1999,leidner_2012}.
Recently the first polyhedral study of the total matching problem was introduced~\cite{Polyhedra}.

Before exploring the total matching problem further, we discuss our motivations to study this problem through the prism of subdeterminants. It is conjectured that the IP $\max \{w^\intercal x : Ax \le b,\ x \in \mathbb{Z}^n\}$ can be solved in strongly polynomial time whenever the maximum subdeterminant of the constraint matrix $A$ is bounded by a constant $\Delta \in \mathbb{Z}_{\ge 1}$. 

If $\Delta = 1$ then $A$ is \emph{totally unimodular} and solving the IP is equivalent to solving its LP relaxation $\max \{w^\intercal x : Ax \le b\}$, and can hence be done in strongly polynomial time. This is a foundational result in discrete optimization, with countless applications. Artmann, Weismantel and Zenklusen \cite{artmann_2017} proved that this generalizes to $\Delta = 2$. To this day, the general conjecture remains open for all $\Delta \ge 3$, despite recent efforts \cite{artmann_2016,gribanov_2016b,naegele_2019,gribanov_2022,glanzer_2022,naegele_2022,naegele_2023}. However, it is known to hold in some special cases, for instance when $A$ has at most two nonzeroes per row or per column \cite{fiorini_2022}.

While the conjecture remains out of reach, it is worthwhile to \emph{test} it on interesting problems, and the total matching is one of them. One of the expected outcomes of such research efforts is to understand better how the structure of the constraint matrix $A$ relates to the maximum subdeterminant, and more specifically what a constant bound on the maximum subdeterminant entails in terms of forbidden submatrices of $A$. 

For instance, incidence matrices of odd cycles have determinant $\pm 2$, and unsurprisingly they are relevant to our work. However, it turns out that incidence matrices of \emph{near-pencils} play an even more important role. These are defined as follows. For $k \in \mathbb{Z}_{\ge 1}$, let $N_k$ denote the $(1+k) \times (1+k)$ matrix defined as
$
N_k :=
\begin{bmatrix}
    1 &\mathbf{1}^\intercal\\
    \mathbf{1} &I
\end{bmatrix}
$.
This matrix is the incidence matrix of a \emph{near-pencil} of order $k$. Notice that $\det N_k = 1-k$. 

We conclude this introduction with an overview of the paper. We first consider the case of general graphs in Section~\ref{sec:general} and prove our main algorithmic results, which are a $O(2^{O(\Delta \log \Delta)} (|V(G)|+|E(G)|))$ time algorithm to solve the total matching problem on graphs $G$ with $\Delta(G) \le \Delta$ and a $O(|V(G)|+|E(G)| + 2^{O(\Delta \log \Delta)})$ time algorithm to either correctly report that $\Delta(G) > \Delta$ or compute $\Delta(G)$ exactly. In Section~\ref{sec:forests}, we study further properties of $\Delta(G)$ when $G$ is a forest. In Section~\ref{sec:conclusion}, give an outlook on the questions solved in this paper and also on future research. 

\section{General graphs} \label{sec:general}

In this section, we first prove some structural results that relate $\Delta(G)$ to the structure of $G$ (in Section~\ref{sec:structure}), and then use these results to obtain our main two algorithmic results (in Section~\ref{sec:algorithms}).

\subsection{Maximum subdeterminants and graph structure} \label{sec:structure}

\begin{lemma}\label{lemma:components}
If $G_1$, \ldots, $G_c$ denote the components of graph $G$, then
$$
\Delta(G) = \prod_{i=1}^c \Delta(G_i)\,.
$$
\end{lemma}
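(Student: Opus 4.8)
The plan is to prove this by analyzing the block structure of $M(G)$ induced by the connected components of $G$. The key observation is that the incidence relation described right before the lemma makes $M(G)$ the incidence matrix between elements of $V(G) \cup E(G)$, and two elements from \emph{different} components are never incident. Consequently, after reordering the rows and columns of $M(G)$ so that all elements (vertices and edges) belonging to the same component $G_i$ are grouped together, the matrix $M(G)$ becomes block diagonal, with the $i$-th diagonal block equal to $M(G_i)$ and all off-diagonal blocks zero.

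\medskip

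\noindent I would then exploit standard facts about subdeterminants of block-diagonal matrices. Given any square submatrix $M'$ of $M(G)$, I select rows and columns of $M(G)$, and these selections split according to the component blocks. If $M'$ picks up $r_i$ rows and $k_i$ columns that lie in the block corresponding to $G_i$, then for $\det M'$ to be possibly nonzero we need $r_i = k_i$ for every $i$: otherwise some block is non-square, forcing either an all-zero row or an all-zero column within a Laplace expansion restricted to that block, hence $\det M' = 0$. When $r_i = k_i$ for all $i$, the block-diagonal structure gives the factorization $\det M' = \prod_{i=1}^c \det M_i'$, where $M_i'$ is the square submatrix of $M(G_i)$ selected in block $i$. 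Taking absolute values and maximizing over all choices, the product structure yields
$$
\Delta(G) = \max_{M'} |\det M'| = \prod_{i=1}^c \max_{M_i'} |\det M_i'| = \prod_{i=1}^c \Delta(G_i)\,,
$$
where the middle equality holds because the blocks can be chosen independently.

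\medskip

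\noindent For the inequality $\Delta(G) \le \prod_i \Delta(G_i)$ this is immediate from the factorization above. For the reverse inequality $\Delta(G) \ge \prod_i \Delta(G_i)$, I would pick, for each component $G_i$, a square submatrix $M_i'$ of $M(G_i)$ achieving $|\det M_i'| = \Delta(G_i)$, and assemble these into a single block-diagonal submatrix $M'$ of $M(G)$ whose determinant is exactly $\prod_i \det M_i'$ in absolute value; this is a legitimate submatrix precisely because distinct components occupy disjoint rows and columns of $M(G)$. One slightly delicate point is to confirm that the maximizing submatrix in each block can always be taken to be square and that the empty submatrix (with determinant $1$ by convention) never causes issues, so that $\Delta(G_i) \ge 1$ always holds and the product is well-behaved.

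\medskip

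\noindent I expect the main (though mild) obstacle to be purely bookkeeping: carefully justifying that the row/column selection of an arbitrary square submatrix must be ``component-balanced'' ($r_i = k_i$) to have nonzero determinant, and then cleanly invoking the determinant multiplicativity for block-diagonal matrices. None of this is deep, but stating it precisely requires being careful about how a submatrix of $M(G)$ decomposes across the blocks. The conceptual content is entirely captured by the single sentence: the incidence matrix of a disjoint union of graphs is the direct sum of the incidence matrices of the pieces, and the maximum subdeterminant is multiplicative over direct sums.
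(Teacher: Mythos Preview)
Your proposal is correct and follows exactly the same approach as the paper: observe that grouping the elements of each component makes $M(G)$ block-diagonal with blocks $M(G_i)$, and deduce multiplicativity of the maximum subdeterminant. The paper's proof is much terser (it simply says ``the result follows'' after noting the block structure), whereas you spell out the $r_i = k_i$ argument and both inequalities; this extra care is fine but not strictly necessary.
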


\begin{proof}
    Consider any ordering of the elements (vertices and edges) of $G$ such that the elements of each component $G_i$ are consecutive. With respect to such an ordering, $M(G)$ has a block-diagonal structure, with one block per component. The result follows.\qed 
\end{proof}

The next two lemmas implicitly use the fact that $\Delta(G)$ is monotone under taking subgraphs: if $G \subseteq H$ then $\Delta(G) \le \Delta(H)$.

\begin{lemma}\label{lemma:degree}
    Let $G$ be a graph and let $d_\mathrm{max}(G)$ denote the maximum degree of $G$. Then 
    $$
    \Delta(G) \ge d_\mathrm{max}(G) - 1\,.
    $$
\end{lemma}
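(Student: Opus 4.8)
The plan is to exhibit a single explicit square submatrix of $M(G)$ whose determinant has absolute value exactly $d_\mathrm{max}(G) - 1$. Since $\Delta(G)$ is, by definition, the maximum of $|\det M'|$ over all square submatrices $M'$ of $M(G)$, producing one such submatrix immediately yields the desired lower bound. The introduction already signals that near-pencils are the relevant objects here, so I expect the witnessing submatrix to be precisely a near-pencil matrix $N_d$ with $d = d_\mathrm{max}(G)$.

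Concretely, let $v$ be a vertex of maximum degree $d := d_\mathrm{max}(G)$, and let $e_1, \dots, e_d$ be the edges incident to $v$. I would consider the principal submatrix of $M(G)$ indexed (in both rows and columns) by the element set $S = \{v, e_1, \dots, e_d\}$, and read off its entries directly from the incidence relation encoded by $M(G)$: the diagonal entries are all $1$; the entry between $v$ and each $e_i$ is $1$, since $v$ is an endpoint of $e_i$; and, crucially, the entry between two distinct edges $e_i, e_j$ is $0$. This last point is the only thing that genuinely needs checking, and it is exactly where the paper's convention matters: under the stated definition, two distinct edges are never \emph{incident} (incidence requires one of the two elements to be a vertex), even though $e_i$ and $e_j$ share the endpoint $v$. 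Hence the edge-edge block is the identity, and the submatrix on $S$ is exactly
$$
\begin{bmatrix} 1 & \mathbf{1}^\intercal \\ \mathbf{1} & I \end{bmatrix} = N_d\,.
$$

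To finish, I would invoke the determinant computation already recorded in the introduction, namely $\det N_d = 1 - d$, which gives $|\det N_d| = d - 1 = d_\mathrm{max}(G) - 1$. Therefore $\Delta(G) \ge |\det N_d| = d_\mathrm{max}(G) - 1$, as claimed. I do not anticipate any serious obstacle here: the argument is a direct construction rather than an estimate, and the only subtle point is the observation that the $d$ edges through $v$ contribute an identity block (not a block of all ones) to the submatrix, which is precisely what makes its determinant equal to the near-pencil value $1 - d$ rather than something smaller in absolute value.
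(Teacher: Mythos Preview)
Your proposal is correct and matches the paper's proof exactly: pick a vertex of maximum degree $d$, observe that the principal submatrix of $M(G)$ on $\{v,e_1,\dots,e_d\}$ is the near-pencil $N_d$, and conclude $\Delta(G)\ge|\det N_d|=d-1$. Your explicit verification that the edge--edge block is the identity (because two distinct edges are not ``incident'' under the paper's definition) is the only point the paper leaves implicit.
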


\begin{proof}
    If $G$ contains a vertex of degree $k \ge 1$, then $M(G)$ contains $N_k$ as a submatrix. Hence, $\Delta(G) \geq |\det N_k| = k-1$. \qed 
\end{proof}

Below, we use the notation $C = v_1 e_1 v_2 e_2 \cdots v_n e_n v_1$ to denote the cycle with vertex set $V(C) = \{v_1,v_2,\ldots,v_n\}$ and edge set $E(C) = \{e_1,e_2,\ldots,e_n\}$ where $e_i = v_iv_{i+1}$ for $i \in [n-1] = \{1,2,\ldots,n-1\}$ and $e_n = v_nv_1$. Later, we use a similar notation for paths.

\begin{lemma} \label{lemma:cycle_LB_2}
    Let $k\in \mathbb{N}$. If a graph $G$ contains $k$ disjoint cycles, then $\Delta(G)\ge 2^k$. 
\end{lemma}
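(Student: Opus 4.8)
The plan is to reduce to the case of a single cycle and then multiply determinants. Concretely, I will show that for every cycle $C$ the matrix $M(C)$ contains a square submatrix of determinant $\pm 2$ involving only the vertices and edges of $C$; granting this, the lemma follows exactly as in Lemma~\ref{lemma:components}. Indeed, let $C_1,\dots,C_k$ be the $k$ pairwise (vertex-)disjoint cycles and let $S_i$ be such a submatrix for $C_i$. The rows and columns selected for distinct cycles then involve disjoint sets of elements, and since no element of $C_i$ is incident to an element of $C_j$ for $i\neq j$, the submatrix of $M(G)$ obtained by taking all the selected rows and columns together is block-diagonal with diagonal blocks $S_1,\dots,S_k$. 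Hence it has determinant of absolute value $\prod_{i=1}^{k}|\det S_i| = 2^k$, so $\Delta(G)\ge 2^k$.

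It remains to produce, for a single cycle $C = v_1 e_1 \cdots v_n e_n v_1$, a submatrix of determinant $\pm 2$. If $n$ is odd this is immediate: the $V(C)\times E(C)$ block of $M(C)$ is exactly the incidence matrix $B(C)$ of the odd cycle $C$, whose determinant is $\pm 2$ as recalled in the introduction.

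The even case is the real obstacle, and the naive choices fail here: when $n$ is even, $B(C)$ is the incidence matrix of a bipartite graph and is therefore singular, and one checks that the full matrix $M(C)$ can be singular as well (for instance whenever $3\mid n$). My proposal is to instead locate the incidence matrix of the \emph{odd} cycle $C_{n+1}$ inside $M(C)$. Take as rows the $n+1$ elements $v_1,\dots,v_n,e_1$ and as columns the $n+1$ elements $v_1,v_2,e_2,e_3,\dots,e_n$. Reading the rows in the order $v_1,e_1,v_2,v_3,\dots,v_n$ and the columns in the order $v_1,v_2,e_2,e_3,\dots,e_n$, the incidence relation makes this submatrix lower bidiagonal with a single extra $1$ in the top-right corner (the entry recording $v_1\in e_n$): row $v_1$ has its ones in columns $v_1$ and $e_n$, row $e_1$ in columns $v_1$ and $v_2$, and each row $v_i$ with $i\ge 2$ in the columns of $e_{i-1}$ and $e_i$. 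This is precisely the incidence matrix of $C_{n+1}$, and since $n+1$ is odd its determinant is $\pm 2$.

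I expect this even-cycle construction to be the only delicate point. Verifying that the displayed submatrix is exactly the $C_{n+1}$ incidence matrix is a direct check of which row-element/column-element pairs are incident, and the block-diagonal reduction is routine bookkeeping from disjointness. A pleasant uniform way to phrase the key step is that $M(C)$ always contains the incidence matrix of some odd cycle, namely $C$ itself when $n$ is odd and $C_{n+1}$ when $n$ is even, so that the determinant $\pm 2$ is obtained in both cases.
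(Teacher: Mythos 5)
Your proof is correct and follows essentially the same route as the paper: handle an odd cycle via its incidence matrix, handle an even cycle by exhibiting an odd-cycle incidence matrix inside $M(C)$ (the paper swaps out $e_n$ for $v_1,v_n$ where you swap out $e_1$ for $v_1,v_2$ --- the same construction up to relabelling), and combine disjoint cycles by block-diagonality as in Lemma~\ref{lemma:components}. One cosmetic slip: for $i=2$ the row $v_2$ has its ones in columns $v_2$ and $e_2$ (not $e_1$ and $e_2$, since $e_1$ is not a selected column), but the row/column sets you chose do yield the incidence matrix of $C_{n+1}$ as claimed.
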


\begin{proof}
    Firstly we consider the case where $k=1$. 
    We may assume without loss of generality that $G$ is a cycle. If $G$ is an odd cycle, then its incidence matrix $B = B(G)$ has determinant $\pm 2$ and hence $\Delta(G) \geq |\det B| = 2$. 
    
    Now assume that $G := v_1 e_1 v_2 e_2 \cdots v_n e_n v_1$ is an even cycle. Consider the submatrix $M'$ of $M(G)$ whose rows are indexed by $V(G) \cup \{e_n\}$ and whose columns are indexed by $E(G) \setminus \{e_n\} \cup \{v_1,v_n\}$. It is easy to see that $M'$ is the incidence matrix of an odd cycle, hence we get again $\Delta(G) \geq |\det M'| = 2$.

    The bound for $k\ge 2$ follows from the above and Lemma~\ref{lemma:components}.
\qed \end{proof}

\begin{lemma}\label{lemma:TUMpath}
$M(G)$ is totally unimodular if and only if every component of $G$ is a path.
\end{lemma}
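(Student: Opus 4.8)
The plan is to handle the two implications separately, reducing in both cases to a single connected graph. Since $M(G)$ is a $0/1$ matrix, it is totally unimodular exactly when every square submatrix has determinant in $\{-1,0,1\}$, i.e.\ when $\Delta(G) \le 1$; and as $M(G)$ has a nonzero entry we always have $\Delta(G) \ge 1$. So total unimodularity of $M(G)$ is equivalent to $\Delta(G) = 1$, and by Lemma~\ref{lemma:components} it suffices to understand $\Delta$ on each component.

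For the ``only if'' direction I argue by contraposition: if some component $C$ is not a path, I exhibit a submatrix of determinant $\pm 2$. A connected graph with all degrees at most $2$ is either a path or a cycle, so a non-path component either contains a cycle or has a vertex of degree at least $3$. In the first case Lemma~\ref{lemma:cycle_LB_2} (with $k=1$) gives $\Delta(C) \ge 2$; in the second Lemma~\ref{lemma:degree} gives $\Delta(C) \ge d_\mathrm{max}(C) - 1 \ge 2$. Either way $\Delta(C) \ge 2$, and since $\Delta(C') \ge 1$ for every component $C'$, Lemma~\ref{lemma:components} yields $\Delta(G) \ge 2$, so $M(G)$ is not totally unimodular.

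For the ``if'' direction I must show that $M(P)$ is totally unimodular for a path $P = v_1 e_1 v_2 \cdots e_{n-1} v_n$; the full claim then follows because a block-diagonal matrix is totally unimodular if and only if each block is (equivalently, via Lemma~\ref{lemma:components} together with $\Delta(P)=1$). The key observation---and the only real content of the proof---is to reorder the elements of $P$ along the path as $v_1, e_1, v_2, e_2, \ldots, e_{n-1}, v_n$. Under this ordering two elements of $P$ are incident if and only if they are equal or occupy adjacent positions: consecutive vertex--edge pairs are incident, while two vertices or two edges are never incident and never adjacent in the ordering. Hence $M(P)$ becomes the tridiagonal $0/1$ matrix $T$ with $T_{jk} = 1$ iff $|j-k| \le 1$. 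In each row of $T$ the ones occupy the consecutive block $\{j-1,j,j+1\}$, so $T$ has the consecutive-ones property and is therefore totally unimodular by the classical fact that matrices with the consecutive-ones property are totally unimodular.

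I expect the reordering step to be the crux: once incidence along the path is recognized as adjacency in the natural element ordering, total unimodularity is immediate from the consecutive-ones property, whereas the converse is a routine application of the lemmas already proved. A minor point to get right is the degenerate components (isolated vertices, single edges), for which $\Delta = 1$ holds trivially and which are themselves paths.
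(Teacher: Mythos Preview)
Your proof is correct and follows essentially the same approach as the paper's: you use the natural ordering $v_1, e_1, v_2, \ldots, e_{n-1}, v_n$ to exhibit the consecutive-ones property of $M(P)$ for the ``if'' direction, and you invoke Lemmas~\ref{lemma:degree} and~\ref{lemma:cycle_LB_2} for the converse. The only cosmetic difference is that you phrase the converse by contraposition and spell out the tridiagonal structure explicitly, whereas the paper argues directly that total unimodularity forces $d_{\max}(G) \le 2$ and forbids cycles.
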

    
\begin{proof}
    First, assume that every component of $G$ is a path. By Lemma~\ref{lemma:components}, we may assume that $G$ is a path, say $G = v_1 e_1 v_2 e_2 \cdots e_{n-1} v_n$. Using this ordering of its elements, we see that $M(G)$ has the consecutive ones property, and is hence totally unimodular.

    Conversely, assume that $M(G)$ is totally unimodular. By Lemma~\ref{lemma:degree}, the maximum degree of $G$ is at most $2$. By Lemma~\ref{lemma:cycle_LB_2}, every component of $G$ is a path.
\qed \end{proof}

Let $G$ be any graph. Consider the corresponding constraint matrix $M = M(G)$, and a \emph{witness}, i.e., a square submatrix $M'$ of $M$ whose determinant is maximum in absolute value. It is natural to assume that every proper square submatrix $M''$ of witness $M'$ has $|\det M''| < |\det M'|$, in which case we say that $M'$ is \emph{minimal}. In particular, if $M'$ is a minimal witness then every row and column of $M'$ has at least two ones. 

In order to facilitate the analysis, we use the following terminology for representing the witness $M'$. We say that an element of $G$ (vertex or edge) is \emph{red} if the corresponding row is selected as a row of $M'$, and \emph{cyan} if the corresponding column is selected as a column of $M'$. Notice that an element can be red and cyan at the same time, in which case we say that it is \emph{bichromatic} (or \emph{bicolored}). The elements that belong to a single color are said to be \emph{monochromatic}. We allow some elements to be \emph{uncolored}. 
Clearly the square submatrices of $M$ correspond to the colorings of the elements of $G$ with $k$ red elements and $k$ cyan elements, where $k$ is a positive integer.

We say that witness $M' \in \{0,1\}^{k \times k}$ has a \emph{fault} if $M'$ has two rows (resp.\ columns) such that, regarded as sets, one is contained in the other and the two differ by at most one element. A minimal witness $M'$ cannot have a fault, since either $\det M' = 0$ or one can remove one row and column of $M'$ to obtain a submatrix $M'' \in \{0,1\}^{(k-1) \times (k-1)}$ such that $|\det M''| = |\det M'|$. This can easily be seen by performing a row (resp.\ column) operation on $M'$.

\begin{lemma} \label{lem:technical}
    Let $G$ be a graph. Let $x_0, x_1, x_2, x_3, x_4$ be elements of $G$ such that each $x_i$, $i \in [3]$ is incident to $x_{i-1}, x_i, x_{i+1}$ and to no other element of $G$ (possibly $x_0 = x_4$, but the elements are otherwise distinct). Consider a minimal witness $M'$ for $G$ and the corresponding coloring. The following hold:
    \begin{enumerate}[(i)]
        \item $x_1$ is uncolored if and only if $x_2$ is uncolored;
        \item  if $x_2$ is bicolored and $x_1, x_2, x_3$ are all colored in the same color, then each of $x_0, x_1, x_2, x_3, x_4$ is colored in the opposite color.
    \end{enumerate}
\end{lemma}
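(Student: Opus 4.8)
The plan rests on three ingredients already in place: the two consequences of minimality (every row and column of $M'$ has at least two ones, and $M'$ has no fault) and two symmetries. Because $M = M(G)$ is symmetric, transposing $M'$ exchanges the colours \emph{red} and \emph{cyan}, so in each sub-argument I may fix the colour of the element I start from; and the hypotheses are invariant under the reversal $x_i \mapsto x_{4-i}$, which swaps $x_1 \leftrightarrow x_3$ and $x_0 \leftrightarrow x_4$. The engine of the proof is that each $x_i$ with $i \in \{1,2,3\}$ is incident only to $x_{i-1}, x_i, x_{i+1}$: if $x_i$ is red then its row is supported inside $\{x_{i-1},x_i,x_{i+1}\}$, and symmetrically for columns. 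Writing $R$ (resp.\ $C$) for the set of red (resp.\ cyan) elements, I will repeatedly compare two such short consecutive rows or columns. For instance, if columns $x_i$ and $x_{i+1}$ both occur, their red-supports are $\{x_{i-1},x_i,x_{i+1}\} \cap R$ and $\{x_i,x_{i+1},x_{i+2}\} \cap R$; whenever one of the outer elements $x_{i-1}, x_{i+2}$ lies outside $R$ these two supports are nested and differ by at most one element, i.e.\ they form a fault.

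For the implication ``$x_1$ uncolored $\Rightarrow x_2$ uncolored'' of (i), I would argue by contradiction. Assume $x_1$ uncolored and, transposing if needed, $x_2 \in R$. Since $x_1 \notin C$, the row of $x_2$ is supported on $\{x_2, x_3\}$, so both are cyan (two ones are required). The columns of $x_2$ and $x_3$ now occur, and their red-supports $\{x_1,x_2,x_3\} \cap R$ and $\{x_2,x_3,x_4\} \cap R$ are nested (as $x_1 \notin R$) and differ by at most $x_4$, a fault. This direction is completely clean and uses only the controlled neighbour $x_3$.

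For the converse ``$x_2$ uncolored $\Rightarrow x_1$ uncolored,'' the same opening step, applied to a colour on $x_1$, forces $x_0$ and $x_1$ to be bicoloured with the row and column of $x_1$ both equal to $\{x_0, x_1\}$. To turn this into a fault I would compare the columns of $x_0$ and $x_1$: the latter is $\{x_0, x_1\}$, and if $x_0$ is an edge or a degree-two vertex its column is again short and contains $\{x_0,x_1\}$, producing the fault. I expect \emph{this} to be the main obstacle: when $x_0$ is a high-degree vertex its column can be long, so no local fault is forced, and one must use that in the intended setting (a path of degree-two elements) the neighbour just outside the triple is itself controlled, or supply a separate argument at the two endpoints $x_0, x_4$.

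Part (ii) I expect to go through cleanly and entirely locally, never using the uncontrolled endpoints as rows or columns of $M'$. From $x_1, x_2, x_3 \in R$ and $x_2 \in C$ one first gets that the column of $x_2$ equals $\{x_1, x_2, x_3\}$. To see $x_1 \in C$, suppose not: then the row of $x_2$ is supported on $\{x_2, x_3\}$ with two ones, so it equals $\{x_2, x_3\}$, while the row of $x_3$ contains $\{x_2, x_3\}$; these are nested and differ by at most $x_4$, a fault. Hence $x_1 \in C$, and $x_3 \in C$ by reversal. Finally, with $x_1, x_2, x_3$ all cyan the row of $x_2$ equals $\{x_1, x_2, x_3\}$ and the row of $x_1$ contains $\{x_1, x_2\}$; if $x_0 \notin C$ these are nested and differ only in $x_3$, a fault, so $x_0 \in C$, and $x_4 \in C$ by reversal. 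The crucial feature throughout (ii) is that every comparison uses only the controlled rows or columns $x_1, x_2, x_3$, so the endpoints enter merely as candidate entries of a controlled row and are pinned down directly by the no-fault property.
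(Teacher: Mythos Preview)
Your forward implication in (i) and your proof of (ii) are correct and essentially the paper's argument; the paper's (ii) is organised slightly differently (it compares rows $x_1,x_2$ to get $x_0,x_3\in C$ and rows $x_2,x_3$ to get $x_1,x_4\in C$ in one stroke each), but the content is the same.

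Your worry about the converse direction of (i) is not only legitimate, it is decisive: the implication ``$x_2$ uncolored $\Rightarrow x_1$ uncolored'' is \emph{false} under the stated hypotheses. Take $G$ to be the tree with a vertex $v$ of degree~$4$, three pendant edges $e_1,e_2,e_3$ at $v$, and a pendant path $v\,w\,z$ (so $e_4=vw$ and $f=wz$). Set $x_0=v$, $x_1=vw$, $x_2=w$, $x_3=wz$, $x_4=z$; all three middle elements are incident only to their three listed neighbours. One computes $\Delta(G)=3$ (e.g.\ via Theorem~\ref{thm:Delta_for_forests}), realised by the near-pencil $N_4$ on rows and columns $\{v,e_1,e_2,e_3,e_4\}$. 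Every proper square submatrix of $N_4$ has $|\det|\le 2$, so $N_4$ is a minimal witness; in its coloring $x_1=vw$ is bicolored while $x_2=w$ is uncolored. The paper's ``the same argument'' for the converse is therefore not a proof, and your diagnosis---that the obstruction is an uncontrolled high-degree $x_0$---is exactly right. You are also right that in the paper's applications (long induced paths of degree-$2$ vertices, cycles) the neighbouring elements are themselves controlled, so the forward direction applied in both orientations of the chain already yields what is needed; the downstream lemmas are unaffected.
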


\begin{proof}
    (i) Toward a contradiction, suppose that $x_1$ is uncolored, but $x_2$ is colored, say red. Hence, $x_2$ and $x_3$ are both cyan. Looking at the corresponding columns of $M'$, we see that $M'$ has a fault, a contradiction. This shows that $x_2$ is uncolored whenever $x_1$ is. The same argument shows that $x_1$ is uncolored whenever $x_2$ is.

    (ii) Assume for instance that $x_1$, $x_2$ and $x_3$ are all red, and $x_2$ is also cyan. In order to avoid a fault in the rows for $x_1$ and $x_2$, elements $x_0$ and $x_3$ need to be cyan. Similarly, by the considering the rows for $x_2$ and $x_3$ we conclude that $x_1$ and $x_4$ are cyan. 
\qed \end{proof}

Now consider any sequence $x_0,x_1,x_2,\ldots,x_k$ ($k \ge 4$) of elements of $G$ such that for each $i \in [k-1]$, elements $x_{i-1}, x_i, x_{i+1}$ are distinct and the only elements of $G$ incident to $x_i$. Notice that such sequences essentially correspond to induced paths in $G$. 

More precisely, assume that $x_0$ and $x_k$ are vertices of $G$ (hence $k$ is even). Then, $P := x_2 x_3 \cdots x_{k-2}$ is an induced path in $G$. 

Applying  Lemma~\ref{lem:technical} several times, we see that if one element of $P$ is uncolored, then they all are. Moreover, if some element $x_i$ of $P$ is bicolored with $x_{i-1}, x_{i}, x_{i+1}$ of the same color then all elements of $P$ are bicolored. These observations form the basis of the next two lemmas.

\begin{lemma} \label{lem:long_induced_path}
Let $G$ be a graph containing a path $P$ of length $7$ all whose vertices have degree $2$ in $G$. If $G'$ denotes the graph obtained from $G$ by contracting $P$ to a single vertex, then $\Delta(G) = \Delta(G')$.
\end{lemma}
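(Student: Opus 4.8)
The plan is to establish the two inequalities $\Delta(G) \le \Delta(G')$ and $\Delta(G') \le \Delta(G)$ separately, in each case starting from a minimal witness and transporting it between $G$ and $G'$ while keeping the determinant under control. First I would fix notation: write the length-$7$ path as $P = v_1 e_1 v_2 \cdots e_7 v_8$, and let $f_0$ and $f_8$ be the two edges of $G$ leaving $P$ at $v_1$ and at $v_8$ (they exist and are unique since $v_1,v_8$ have degree $2$). Then $P$ embeds into a chain $x_0,x_1,\dots,x_{18}$ with $x_0,x_{18}$ the outside neighbours, $x_1 = f_0$, $x_{17} = f_8$, and $P = x_2 \cdots x_{16}$, which is exactly the situation the remarks preceding the lemma address. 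Contracting $P$ turns this chain into $x_0, f_0, v^*, f_8, x_{18}$, where the new vertex $v^*$ again has degree $2$; crucially, the rest of $G$ and the two edges $f_0,f_8$ are left untouched by the contraction.

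For $\Delta(G) \le \Delta(G')$, I take a minimal witness $M'$ for $G$ and examine the induced coloring of $P$. By Lemma~\ref{lem:technical}(i) applied repeatedly along the chain, the elements of $P$ are either all uncolored or all colored. The uncolored case is immediate: the only incidences the contraction alters are $f_0 v_1$ and $f_8 v_8$ (which become $f_0 v^*$ and $f_8 v^*$), and these involve the uncolored endpoints $v_1,v_8$, so the corresponding entries never appear in $M'$; hence $M'$ is verbatim a witness for $G'$ with $v^*$ uncolored, of the same determinant.

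The colored case is where total unimodularity enters. Since the elements of $P$ are incident to the rest of $G$ only through $f_0 v_1$ and $f_8 v_8$, ordering $P$ first splits $M'$ into a \emph{path block} (a submatrix of $M(P)$, which is totally unimodular by Lemma~\ref{lemma:TUMpath}) and an \emph{outside block}, coupled by off-diagonal blocks carrying at most the four nonzeros $(v_1,f_0),(v_8,f_8),(f_0,v_1),(f_8,v_8)$. Every permutation contributing to $\det M'$ uses these coupling entries an even, balanced number of times, so a cofactor/Schur expansion along them writes $\det M'$ as a sum of a bounded number of terms, each a product of an outside subdeterminant and a subdeterminant of $M(P)$; by total unimodularity the latter lie in $\{-1,0,1\}$, and the periodicity $D_n = D_{n-1}-D_{n-2}$ of the tridiagonal path determinants pins them down. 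The single vertex $v^*$ admits an analogous expansion along $(v^*,f_0),(v^*,f_8),(f_0,v^*),(f_8,v^*)$, and the plan is to match the two expansions term by term in absolute value, so that the maximum over witnesses is unchanged. The reverse inequality $\Delta(G') \le \Delta(G)$ follows symmetrically, expanding $v^*$ back into $P$ under the uniform coloring dictated by the same computation.

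The main obstacle is precisely this endpoint bookkeeping in the colored case: one must verify that no coloring of the long path can realize a coupling pattern at $\{f_0,f_8\}$ that the contracted vertex cannot reproduce (in magnitude), and conversely. Here the length-$7$ hypothesis is used twice — once to supply enough $5$-element windows to invoke Lemma~\ref{lem:technical} often enough to force the coloring of $P$ to be uniform, and once to leave enough interior room that the period-$6$ behavior of $D_n$ determines the relevant subdeterminants of $M(P)$ independently of the exact length. Showing that the path and the single vertex offer the same achievable determinant magnitudes, with signs being irrelevant for $\Delta$, is the crux of the argument.
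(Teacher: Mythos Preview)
Your setup misreads the statement: in the paper ``length $7$'' means $7$ vertices, so $P = v_1 e_1 \cdots e_6 v_7$ with \emph{six} edges, not seven. This is not cosmetic --- the entire proof hinges on $6 = \mathrm{lcm}(2,3)$, and with seven edges the argument below would fail.

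More substantively, you invoke only part~(i) of Lemma~\ref{lem:technical} to get the dichotomy ``all uncolored / all colored'', and then attack the colored case wholesale by a Schur/cofactor expansion against the outside block. The paper instead uses part~(ii) of Lemma~\ref{lem:technical} to split the colored case into two structurally rigid sub-cases: either every element of $P$ is \emph{bicolored} (so the path block is literally $M(P)$), or every row and column of $M'$ indexed by $P$ has exactly two ones. In the bicolored sub-case the reduction to $G'$ works because $6$ is a multiple of $3$; in the two-ones sub-case it works because $6$ is a multiple of $2$. Your period-$6$ observation about the tridiagonal recurrence $D_n = D_{n-1} - D_{n-2}$ is essentially the first of these two facts, but you never isolate the second sub-case, and without that split the ``match the two expansions term by term in absolute value'' step is not justified: in a generic coloring of $P$ the path block is neither $M(P)$ nor a $2$-regular slice of it, and the coupling cofactors need not behave as you claim.

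So the gap is twofold: the wrong edge count prevents the divisibility from going through, and the missing use of Lemma~\ref{lem:technical}(ii) leaves the heart of the colored case as an unproved assertion. Once you correct the length and apply part~(ii), the three cases the paper uses each reduce to a one-line determinant identity, and the elaborate cofactor bookkeeping becomes unnecessary.
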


\begin{proof}
Let $P = v_1 e_1 v_2 e_2 \cdots e_6 v_7$, where $v_i$, $i \in [7]$ denote the vertices of $P$ and $e_i$, $i \in [6]$ denote the edges of $P$. Let $e_0$ (resp.\ $e_7)$ denote the other edge of $G$ incident to $v_1$ (resp.\ $v_7$). Similarly, let $v_0$ (resp.\ $v_8$) denote the end of $e_0$ distinct from $v_1$ (resp.\ of $e_7$ distinct from $v_7$). Possibly, $v_0 = v_8$.

Again, consider a minimal witness $M'$ for $G$ and the corresponding coloring. By Lemma~\ref{lem:technical}, there are three cases to consider.\medskip

\noindent{}\emph{Case 1}. All the elements of $P$ are uncolored. Then $M'$ is a submatrix of $M(G')$, hence
$\Delta(G) = \Delta(G')$. 
\medskip

\noindent{}\emph{Case 2}. All the elements of $P$ are bicolored. Then there exists a square submatrix $M''$ of $M(G')$ with $|\det M''| = |\det M'|$. (Here, we use the fact that $P$ has $6$ edges, and that $6$ is a multiple of $3$.) We have once more that $\Delta(G) = \Delta(G')$.
\medskip

\noindent{}\emph{Case 3}. Every row and column of $M'$ corresponding to an element of $P$ has exactly $2$ ones. Again, there exists a square submatrix $M''$ of $M(G')$ with $|\det M''| = |\det M'|$, which implies $\Delta(G) = \Delta(G')$. (This time we use the fact that $6$ is a multiple of $2$.)
\qed \end{proof}

\begin{lemma}\label{lemma:cycle}
Let $G$ be a $n$-cycle. Then, 
\begin{equation*}
   \Delta(G)=\left\{
  \begin{array}{@{}ll@{}}
    3 & \text{if}\ n \equiv 1,2 \pmod 3, \\
    2 & \text{if}\ n \equiv 0 \pmod 3.
  \end{array}\right.
\end{equation*}
\end{lemma}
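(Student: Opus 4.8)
The plan is to anchor everything on one reformulation. Listing the $2n$ elements of $C_n$ in cyclic order as $v_1,e_1,v_2,e_2,\dots,v_n,e_n$, two elements are incident exactly when they are equal or cyclically consecutive, so $M(C_n)=I+A(C_{2n})$ is the cyclic tridiagonal $0/1$ matrix of order $2n$. For the lower bound, Lemma~\ref{lemma:cycle_LB_2} already gives $\Delta(C_n)\ge 2$ for every $n$. When $n\not\equiv 0\pmod 3$ I would upgrade this to $\Delta(C_n)\ge 3$ by evaluating the determinant of the full matrix: since $M(C_n)$ is a circulant, $\det M(C_n)=\prod_{j=0}^{2n-1}(1+2\cos(\pi j/n))$, and using $1+2\cos\theta=\sin(3\theta/2)/\sin(\theta/2)$ together with the classical product $\prod_{j=1}^{m-1}\sin(\pi j/m)=m/2^{m-1}$ for $m=2n$, one gets $\lvert\det M(C_n)\rvert=3$ precisely when $3\nmid n$, while a factor vanishes (so the determinant is $0$) exactly when $3\mid n$.

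For the upper bound I would argue entirely through the witness machinery already set up. Replacing an arbitrary square submatrix by a minimal one with the same absolute determinant, I may assume the witness $M'$ is fault-free with at least two ones in every row and column. Since every element of a cycle is incident only to its two neighbours, Lemma~\ref{lem:technical}(i) applies to every consecutive pair, and running it around the cycle forces the coloring to be \emph{uniform}: either all $2n$ elements are uncolored (so $M'$ is empty) or all are colored. In the latter case Lemma~\ref{lem:technical}(ii) splits the analysis into two modes: either every element is bichromatic, or no bichromatic element has two like-colored neighbours. A short local check (a monochromatic element is forced to have exactly two ones, while a bichromatic element with two like-colored neighbours would fall into the first mode) shows that in the second mode every row and column of $M'$ has exactly two ones.

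It remains to bound the determinant in each mode. If all elements are bichromatic, then $M'=M(C_n)=I+A(C_{2n})$, whose determinant is $\pm 3$ or $0$ by the computation above. If every row and column has exactly two ones, then $M'$ is the biadjacency matrix of a $2$-regular bipartite graph $H$, and I would show $H$ is a \emph{single} cycle, whence $\lvert\det M'\rvert\in\{0,2\}$. Connectivity of $H$ follows by a token-chaining argument: two like-colored elements are never adjacent (a monochromatic element needs both neighbours of the opposite colour), two bichromatic elements are never adjacent (they would force an uncolored neighbour, contradicting uniformity), and the two tokens of a bichromatic element are joined through its diagonal $1$; hence the tokens at consecutive positions of $C_{2n}$ are always linked in $H$, so $H$ is connected. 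Combining the two modes yields $\Delta(C_n)\le 3$ in general and $\Delta(C_n)\le 2$ when $3\mid n$ (the bichromatic mode then contributes $0$); with the lower bounds this gives the stated values.

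The main obstacle is the exact evaluation of the circulant determinant, since that is what pins the constant to $3$ and produces the clean dichotomy between $3\mid n$ and $3\nmid n$; the secondary difficulty is making the global trichotomy rigorous, i.e.\ verifying that the local propagation of Lemma~\ref{lem:technical} closes up consistently around the cycle rather than leaving a seam. As an independent check — and an alternative route that sidesteps the circulant computation for large $n$ — one can instead invoke Lemma~\ref{lem:long_induced_path} to contract any cycle with $n\ge 9$ to one with $n-6$ vertices while preserving $\Delta$ and the residue mod $6$ (hence mod $3$), reducing the whole statement to the finitely many base cases $n\in\{3,\dots,8\}$.
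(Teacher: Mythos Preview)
Your proof is correct and follows the same line as the paper's: pass to a minimal witness, use Lemma~\ref{lem:technical} to force the dichotomy ``$M'=M(G)$'' versus ``exactly two ones per row and column'', and evaluate the determinant in each case. You in fact supply two details the paper leaves implicit---the explicit circulant evaluation of $\det M(C_n)$ and the connectedness argument showing the two-ones mode yields a \emph{single} cycle (hence $|\det M'|\le 2$ rather than a product of $2$'s)---and your alternative reduction to $n\in\{3,\dots,8\}$ via Lemma~\ref{lem:long_induced_path} is a valid independent route.
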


\begin{proof}
    Consider a minimal witness $M' \in \{0,1\}^{k \times k}$ for $G$ and the corresponding coloring. By Lemma~\ref{lem:technical}, either $M' = M(G)$ or $M'$ is a submatrix of $M(G)$ having exactly two ones in each row and column (see also the proof of Lemma~\ref{lemma:cycle_LB_2}).
    In the first case, $k = 2n$ and $n \equiv 1,2 \pmod 3$ and $| \det M' | = 3$ (if $n \equiv 0 \pmod 3$ then $| \det M' | = 0$, contradicting the fact that $M'$ is a witness). In the second case, $k \equiv 1 \pmod 2$ and $| \det M' | = 2$ (if $k \equiv 0 \pmod 2$ then again $| \det M' | = 0$, a contradiction).
\qed \end{proof}

\begin{lemma}\label{lemma:LB}
Let $G$ be a graph and let $D \subseteq V(G)$ be a set of vertices which have at least $2$ neighbours in $\overline{D} := V(G) \setminus D$. Let $H$ denote the bipartite graph obtained from $G$ by deleting all edges contained in $D$ or $\overline{D}$. Then 
\[
\Delta(G)\ge \prod_{v\in D} \left(d_{H}(v)-1\right)\,.
\]
\end{lemma}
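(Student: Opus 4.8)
The plan is to exhibit a single square submatrix $M'$ of $M(G)$ whose determinant has absolute value exactly $\prod_{v \in D}(d_H(v) - 1)$; the bound then follows at once from the definition of $\Delta(G)$. The building block is the near-pencil $N_k$ used in Lemma~\ref{lemma:degree}: for a vertex $v$ of degree $k$, selecting the row and column of $v$ together with the rows and columns of its $k$ incident edges produces a copy of $N_k$, precisely because distinct edges are pairwise non-incident in $M(G)$. The idea is to perform this simultaneously for every $v \in D$, but using \emph{only} the edges of $H$ (those running between $D$ and $\overline D$), and then to check that the resulting near-pencils do not interfere with one another.

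Concretely, let $E_H$ be the edge set of $H$ and take as both the row index set and the column index set of $M'$ the collection $D \cup E_H$. First I would record the four blocks of $M'$: the vertex--vertex block is the identity (distinct vertices are non-incident), the edge--edge block is the identity (distinct edges are non-incident), and the vertex--edge block has a $1$ in position $(v,e)$ exactly when $v$ is the endpoint of $e$ lying in $D$. Since each $e \in E_H$ has a unique endpoint in $D$, grouping the edges of $E_H$ by that endpoint partitions $E_H$ and, after the corresponding simultaneous reordering, puts $M'$ into block-diagonal form with one block per $v \in D$. The key step is to verify that the blocks for two distinct $v, v' \in D$ genuinely do not interact, so that the off-diagonal blocks vanish. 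This is where the construction matters: no column indexed by a vertex of $\overline D$ is ever selected, so two selected edges meet only if they are equal, and a vertex $v$ is incident to an edge $e$ of another group only if it is its $D$-endpoint, which it is not (the endpoints of such an $e$ are $v' \ne v$ and some $u \in \overline D$, with $u \ne v$). Hence the block associated with $v$ is precisely $N_{d_H(v)}$, formed by the row/column of $v$ and the $d_H(v)$ edges whose $D$-endpoint is $v$.

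Finally, as $M'$ is block-diagonal its determinant is the product of its block determinants, giving $|\det M'| = \prod_{v \in D} |\det N_{d_H(v)}| = \prod_{v \in D}(d_H(v)-1)$; here the hypothesis that every $v \in D$ has at least two neighbours in $\overline D$ ensures $d_H(v) \ge 2$, so each factor is a positive integer and no block is singular. I expect the only delicate point to be the vanishing of the cross blocks, which relies entirely on the convention that two distinct edges are non-incident and on never selecting an $\overline D$-vertex as a column; once that is confirmed, the determinant computation is immediate.
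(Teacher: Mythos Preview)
Your proof is correct and follows exactly the same route as the paper: take the principal submatrix of $M(G)$ indexed by $D \cup E_H$, group each $v \in D$ with the edges of $\delta_H(v)$, observe that the result is block-diagonal with near-pencil blocks $N_{d_H(v)}$, and read off the determinant. Your write-up is more explicit about why the off-diagonal blocks vanish; the one spot where the exposition wobbles is the phrase ``no column indexed by a vertex of $\overline D$ is ever selected, so two selected edges meet only if they are equal'' --- the vanishing of the edge--edge off-diagonal entries is simply because the edge--edge block of $M(G)$ is $I$ by definition (as you already noted), not because of which columns were selected --- but this does not affect the validity of the argument.
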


\begin{proof}
Let $M'$ denote the principal submatrix of $M(G)$ whose rows and columns are indexed by vertices $v \in D$ and edges in $\delta_H(v)$ for $v \in D$. After reordering the rows (resp.\ columns) of $M'$ so that the row (resp.\ column) of each vertex $v \in D$ is immediately followed by the rows (resp.\ columns) for the edges in $\delta_H(v)$ (in any order), we see that $M'$ has a block-diagonal structure, with one block per vertex of $D$. Moreover, the block for $v \in D$ in $M'$ is the near-pencil matrix $N_k$ with $k := d_H(v)$. It follows that 
$
\Delta(G) \geq |\det M'| = \prod_{v\in D} \left(d_{H}(v)-1\right)\,.
$
\qed \end{proof}


\begin{lemma}\label{lemma:degBound}
Every graph $G$ has at most $7 \log \Delta(G)$ vertices of degree at least $3$.
\end{lemma}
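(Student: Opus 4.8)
The plan is to apply Lemma~\ref{lemma:LB} to a carefully chosen set $D$ of high-degree vertices. Write $W \subseteq V(G)$ for the set of vertices of degree at least $3$. If I can produce a set $D \subseteq W$ such that every $v \in D$ has at least three neighbours in $\overline D = V(G) \setminus D$, then the bipartite graph $H$ of Lemma~\ref{lemma:LB} satisfies $d_H(v) \ge 3$ for all $v \in D$, so each near-pencil block contributes a factor $d_H(v) - 1 \ge 2$ and I obtain $\Delta(G) \ge \prod_{v \in D}(d_H(v)-1) \ge 2^{|D|}$. Taking logarithms, it then suffices to find such a $D$ whose size is a fixed fraction of $|W|$; this fraction, together with the base chosen for the logarithm, is what produces the constant $7$.

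First I would record the source of freedom that makes this possible: a vertex $v$ placed in $D$ may keep up to $\deg_G(v) - 3$ of its neighbours inside $D$ and still retain three neighbours outside. Hence vertices of large degree are essentially unconstrained, and only the vertices of degree exactly $3$ are rigid — such a vertex can lie in $D$ only if all three of its neighbours lie in $\overline D$, so on the degree-$3$ vertices the set $D$ must be \emph{independent}. This is exactly the behaviour of the two extreme examples: for a clique $K_m$ one takes $D$ to be roughly half of the vertices, each retaining three neighbours outside (large blocks), whereas for a cubic graph one is forced onto an independent set.

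Accordingly I would build $D$ by separating $W$ according to degree. The low-degree part (degree $3$, and more generally the bounded-degree vertices) induces a graph of bounded maximum degree, on which a greedy/Caro--Wei independent set already captures a constant fraction; since any independent set among these vertices automatically gives each chosen vertex three neighbours outside $D$, they may be added to $D$ directly. The high-degree vertices are absorbed by a complementary greedy selection (or a random split) that discards only a bounded fraction while guaranteeing three outside neighbours to the survivors, mirroring the $K_m$ computation. Combining the two contributions, the aim is to guarantee $|D| \ge |W|/4$ in the worst case, whence $\Delta(G) \ge 2^{|D|} \ge 2^{|W|/4}$ and the stated bound $|W| \le 7\log\Delta(G)$ follows after fixing the base of the logarithm.

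The main obstacle is precisely this size guarantee for $D$. A naive peeling that repeatedly discards every vertex with fewer than three \emph{current} outside-neighbours can cascade and empty $D$ entirely — already on a cubic graph, where no vertex is ever satisfied until its neighbours have all been removed — so the selection must be made with foresight, balancing the rigidity of the degree-$3$ vertices against the slack available at the higher-degree vertices. Controlling this interaction tightly enough to reach the stated constant, rather than merely some constant, is the delicate part; once $D$ is in hand, the rest is a direct application of Lemma~\ref{lemma:LB}.
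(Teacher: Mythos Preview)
Your proposal has the right endpoint --- apply Lemma~\ref{lemma:LB} to a set $D$ of high-degree vertices each retaining three neighbours outside --- but the construction of $D$ is left open, and the sketch you give for it does not close. The sentence ``since any independent set among these vertices automatically gives each chosen vertex three neighbours outside $D$'' is false once the high-degree part of $D$ is added: a degree-$3$ vertex in your independent set may be adjacent to a high-degree vertex you have also placed in $D$, and then it loses an outside neighbour. The two halves of your split interact, and you have not said how to decouple them. You flag this as ``the delicate part'', but it is essentially the entire content of the lemma; as written this is a plan, not a proof. (A side remark: $|D|\ge |W|/4$ would yield $|W|\le 4\log_2\Delta(G)$, stronger than the claim, so the arithmetic in your last paragraph is also off.)

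The paper sidesteps the construction of a single $D$ by applying Lemma~\ref{lemma:LB} \emph{twice}, to two different sets. First take a maximal vertex-disjoint packing of $K_{1,3}$ subgraphs and let $X$ be the set of covered vertices; with $D$ the set of centres, each centre has its three leaves in $\overline D$, so $\Delta(G)\ge 2^{|X|/4}$ and $|X|\le 4\log\Delta(G)$. Maximality of the packing forces $G-X$ to have maximum degree at most $2$, hence $G-X$ is $3$-colourable. If $Y$ denotes the vertices of $G-X$ of degree $\ge 3$ in $G$, the largest colour class $D\subseteq Y$ has size at least $|Y|/3$ and is independent in $G-X$; every $v\in D$ then has all of its $\ge 3$ $G$-neighbours in $\overline D$ (neighbours in $X$ are outside $D$ since $D\subseteq V(G-X)$, neighbours in $G-X$ are outside $D$ by independence), giving $|Y|\le 3\log\Delta(G)$. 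Since every vertex of degree $\ge 3$ lies in $X\cup Y$, the bound $7\log\Delta(G)$ follows. The $K_{1,3}$ packing is precisely the missing idea: it produces a set with the three-outside-neighbours property for free while simultaneously reducing the residual graph to maximum degree $2$, after which a trivial colouring argument handles the rest.
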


\begin{proof}
Consider any inclusion-wise maximal packing of vertex-disjoint $K_{1,3}$ subgraphs in $G$, and let 
$X$ denote the set of vertices covered by the packing. By Lemma~\ref{lemma:LB}, we have that $|X| \le 4 \log \Delta(G)$. 

By maximality of our packing, all degrees in $G - X$ are at most $2$. Hence, $G-X$ has a proper $3$-coloring. Let $Y \subseteq V(G-X)$ denote the set of vertices of $G-X$ whose degree in $G$ is at least $3$. Considering the color class that contains the largest number of vertices of $Y$, and using again Lemma~\ref{lemma:LB}, we see that $|Y| \leq 3 \log \Delta(G)$. 

We conclude that the total number of vertices of degree at least $3$ in $G$ is at most $|X| + |Y| \leq 4 \log \Delta(G) + 3 \log \Delta(G) = 7 \log  \Delta(G)$.
\qed \end{proof}

\begin{lemma}\label{lem:general_structure}
Let $\Delta \in \mathbb{Z}_{\geq 1}$ be a constant, and let $G$ be any graph such that $\Delta(G) \le \Delta$. There exists a vertex subset $Z$ such that $G - Z$ is a disjoint union of paths, $|Z| \leq 8 \log \Delta$ and $|\delta(Z)| \le 8 (\Delta + 1) \log \Delta$. Moreover, $Z$ can be computed in time $O(|V(G)|+|E(G)|)$.
\end{lemma}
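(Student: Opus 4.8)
The plan is to build $Z$ in two stages: first delete all high-degree vertices, then break the remaining cycles. Let $V_{\ge 3}$ denote the set of vertices of $G$ of degree at least $3$. By Lemma~\ref{lemma:degBound}, $|V_{\ge 3}| \le 7 \log \Delta(G) \le 7 \log \Delta$. Once these vertices are deleted, every vertex of $G - V_{\ge 3}$ has degree at most $2$, so each connected component of $G - V_{\ge 3}$ is either a path or a cycle.

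Next I would argue that there are few cycle components. Each such component is a cycle that is vertex-disjoint from the others, and hence belongs to a collection of vertex-disjoint cycles in $G$. If there were more than $\log \Delta$ of them, Lemma~\ref{lemma:cycle_LB_2} would force $\Delta(G) > 2^{\log \Delta} = \Delta$, a contradiction. I would therefore pick one arbitrary vertex from each cycle component and add it to $Z$, setting $Z := V_{\ge 3} \cup \{\text{one vertex per cycle component}\}$. Removing these vertices turns each cycle into a (possibly trivial) path, so $G - Z$ has maximum degree at most $2$ and is acyclic, i.e., a disjoint union of paths, as required. The cardinality bound is then immediate: $|Z| \le 7 \log \Delta + \log \Delta = 8 \log \Delta$.

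The step I expect to be most delicate is the bound on $|\delta(Z)|$, since the high-degree vertices could in principle contribute many incident edges. The key observation is that Lemma~\ref{lemma:degree} caps every degree: $d_G(v) \le \Delta(G) + 1 \le \Delta + 1$ for all $v$. Moreover, each vertex taken from a cycle component lies outside $V_{\ge 3}$ and has degree exactly $2$ in $G$ (its degree is $2$ in $G - V_{\ge 3}$ and at most $2$ in $G$). Bounding the number of incident edges by the sum of degrees then gives
$$
|\delta(Z)| \le \sum_{v \in Z} d_G(v) \le (\Delta + 1)\, 7 \log \Delta + 2 \log \Delta \le 8(\Delta + 1)\log \Delta\,,
$$
where the last inequality uses $\Delta \ge 1$ (so that $7\Delta + 9 \le 8\Delta + 8$).

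Finally, I would verify the running time. Computing all vertex degrees and extracting $V_{\ge 3}$ is linear, as is forming $G - V_{\ge 3}$. A single traversal (BFS or DFS) identifies the connected components of $G - V_{\ge 3}$ and, since the maximum degree is at most $2$, distinguishes cycles from paths (a component is a cycle precisely when its number of edges equals its number of vertices); picking one vertex per cycle is then immediate. Hence $Z$ is computed in time $O(|V(G)| + |E(G)|)$.
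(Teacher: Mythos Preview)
Your proof is correct and follows essentially the same approach as the paper: define $X$ (your $V_{\ge 3}$) via Lemma~\ref{lemma:degBound}, pick one vertex per remaining cycle using Lemma~\ref{lemma:cycle_LB_2}, and bound $|\delta(Z)|$ via the degree cap from Lemma~\ref{lemma:degree}. The only cosmetic difference is that the paper bounds $|\delta(Z)|$ more coarsely by $(\Delta+1)|Z| \le 8(\Delta+1)\log\Delta$, whereas you separately account for the degree-$2$ cycle vertices before arriving at the same inequality.
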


\begin{proof}
Let $X := \{v \in V(G) \mid d(v) \geq 3\}$ denote the set of vertices of degree at least $3$. By Lemma~\ref{lemma:degBound}, we have $|X|\le 7 \log \Delta$. Obviously, $G-X$ is a disjoint union of paths and cycles. By Lemma~\ref{lemma:cycle_LB_2}, there are at most $\log \Delta$ cycles in $G - X$ (all disjoint). Hence there exists a set $Y \subseteq V(G-X)$ of size $|Y| \le \log \Delta$ intersecting every cycle of $G - X$. Letting $Z := X \cup Y$, we see that $G-Z$ is a disjoint union of paths.
By Lemma~\ref{lemma:degree}, the degree of any vertex in $G$ is at most $\Delta+1$, hence the number of edges in the cut $\delta(Z)$ is at most $(\Delta+1) |Z| \leq 8(\Delta+1)\log \Delta$. 

It is easy to see that the set $Z$ can be found in time linear in the size of the graph. 
\qed \end{proof}

\subsection{Algorithmic results} \label{sec:algorithms}

\begin{theorem}
Let $\Delta \in \mathbb{Z}_{\geq 1}$ be a constant. There is an algorithm to find a total matching of maximum weight in a graph $G$ such that $\Delta(G) \le \Delta$ in time $O(2^{O(\Delta \log \Delta)}(|V(G)|+|E(G)|))$.
\end{theorem}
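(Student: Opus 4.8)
The plan is to exploit the structural decomposition provided by Lemma~\ref{lem:general_structure}. First I would compute, in time $O(|V(G)|+|E(G)|)$, the vertex set $Z$ with $|Z| \le 8\log\Delta$ and $|\delta(Z)| \le 8(\Delta+1)\log\Delta$ such that $G - Z$ is a disjoint union of paths. The idea is that $Z$ acts as a small separator whose removal leaves a trivially solvable instance, so it suffices to guess how an optimal total matching behaves on a small \emph{boundary}, and then extend each guess optimally into the paths. Concretely, I would set the boundary to be $\partial := Z \cup \{\, e \in E(G) : e \text{ has an endpoint in } Z \,\}$, i.e., the vertices of $Z$ together with all edges incident to $Z$ (those inside $Z$ and those in the cut). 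The number of edges inside $Z$ is at most $\binom{|Z|}{2} = O(\log^2\Delta)$ and the number of cut edges is $|\delta(Z)| = O(\Delta\log\Delta)$, so $|\partial| = O(\Delta\log\Delta)$. Since every variable of the IP takes a value in $\{0,1\}$ in any integer solution (the rows force $x_v,y_e \le 1$), the number of possible restrictions of a total matching to $\partial$ is $2^{O(\Delta\log\Delta)}$, and I would enumerate all of them.

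For each candidate (an assignment of $\{0,1\}$ to the boundary variables), I would first discard it unless it satisfies every IP constraint all of whose variables are boundary variables; these are exactly the vertex constraints indexed by $z \in Z$ (which involve $x_z$ and the edges of $\delta(z)$, all in $\partial$) and the edge constraints indexed by edges inside $Z$. The crucial observation is that every vertex of $G-Z$ has degree at most $2$ in $G$, since all vertices of degree at least $3$ were placed into $Z$ via the set $X$ in the proof of Lemma~\ref{lem:general_structure}. Consequently each cut edge attaches to an endpoint of a path, and each path endpoint carries at most one cut edge. Fixing the boundary therefore imposes only \emph{local} constraints at the path endpoints: the vertex constraint $x_v + y_f + y_e \le 1$ (path endpoint $v$ with incident path edge $f$ and cut edge $e$) and the edge constraint $x_z + x_v + y_e \le 1$ (cut edge $e=zv$, $z \in Z$) become, once the guessed values of $y_e$ and $x_z$ are substituted, simple restrictions on whether $v$ may enter the stable set and whether $f$ may enter the matching. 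No constraint couples two distinct path interiors, because any edge with both endpoints outside $Z$ lies inside a single path.

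With these endpoint restrictions in hand, each path is solved independently by a standard left-to-right dynamic program for the weighted total matching on a path, whose states at each vertex record whether it is in the stable set, matched to its left neighbour, matched to its right neighbour, or unused, while enforcing the prescribed restrictions at the two endpoints. This runs in time linear in the length of the path, hence $O(|V(G)|+|E(G)|)$ over all paths. The value of a candidate is the total weight of its chosen boundary elements plus the sum of the optimal path values, and I would output the best candidate. Since the vertices of $Z$, the edges incident to $Z$, the vertices of $G-Z$, and the edges inside $G-Z$ partition $V(G) \cup E(G)$, every element is counted exactly once, so each candidate yields a feasible total matching and an optimal total matching is realized by some candidate. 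For the running time, computing $Z$ costs $O(|V(G)|+|E(G)|)$, there are $2^{O(\Delta\log\Delta)}$ candidates, and each is processed in $O(|V(G)|+|E(G)|)$ time, giving the claimed bound $O(2^{O(\Delta\log\Delta)}(|V(G)|+|E(G)|))$.

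The main obstacle I anticipate is not the path dynamic program, which is routine, but the careful bookkeeping at the interface: one must verify that substituting the guessed boundary values into exactly the constraints coupling $\partial$ to the paths reduces the problem to independent path instances whose only nontrivial restrictions sit at the two endpoints, and that no guessed constraint is missed or double-counted. The degree bound on $G-Z$ is what makes this clean, and making the reduction precise — so that optimality is preserved in both directions — is the step that needs the most care.
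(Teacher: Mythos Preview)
Your proposal is correct and follows essentially the same approach as the paper: compute $Z$ from Lemma~\ref{lem:general_structure}, enumerate all $2^{O(\Delta\log\Delta)}$ choices on the boundary $\partial = Z \cup \{e \in E(G) : e \cap Z \neq \emptyset\}$ (the paper's set $I$), and extend each choice optimally over the remaining paths by linear-time dynamic programming. Your handling of the interface constraints is in fact more explicit than the paper's (which simply deletes the path endpoints hit by selected cut edges); the only slip is the claim that ``each path endpoint carries at most one cut edge''---a singleton component of $G-Z$ may carry two---but this edge case is trivially absorbed by the endpoint restrictions you describe.
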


\begin{proof}
Let $Z \subseteq V(G)$ be as in Lemma~\ref{lem:general_structure}. Let $I = I(Z)$ denote the set of elements of $G$ that are incident to some vertex of $Z$, including the edges with both ends in $Z$. Notice that $|I| = O(\log \Delta) + O(\Delta \log \Delta) + O(\log^2 \Delta)=O(\Delta \log \Delta)$.

We find a total matching of maximum weight by considering, one after the other, each subset total matching $M_1 \subseteq I$. For a fixed $M_1$, let $Z'$ denote the set of vertices of $G - Z$ that are incident to some edge in $M_1$. Notice that by removing from $G-Z$ all the vertices of $Z'$ (as well as the incident edges), we delete all the elements of $G - Z$ that are incident to some element of $M_1$ (and hence cannot be included in any total matching of $G$ extending $M_1$). Let $M_2$ be a maximum weight total matching in $G - Z - Z'$. Such a total matching can be computed in linear time via dynamic programming, since $G - Z - Z'$ is a disjoint union of paths. 

The optimal solution returned by the algorithm is the best total matching of the form $M_1 \cup M_2$ over all choices of a total matching $M_1 \subseteq I$. The number of choices for $M_1$ is bounded by $2^{O(\Delta \log \Delta)}$. Hence the running time of the above algorithm is $O(2^{O(\Delta \log \Delta)} (|V(G)|+|E(G)|))$.
\qed \end{proof}

\begin{theorem}\label{thm:recognition}
Let $\Delta \in \mathbb{Z}_{\geq 1}$ be a constant. There is an algorithm that, given any graph $G$, either computes $\Delta(G)$ or correctly reports that $\Delta(G) > \Delta$ in time $O(|V(G)|+|E(G)|+2^{O(\Delta \log \Delta)})$.
\end{theorem}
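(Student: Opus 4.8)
The plan is to leverage the structural decomposition from Lemma~\ref{lem:general_structure} to reduce the computation of $\Delta(G)$ to a bounded-size problem. First I would run the linear-time procedure of Lemma~\ref{lem:general_structure}; if it fails to produce the promised set $Z$ (equivalently, if $G$ has more than $7\log\Delta$ vertices of degree at least $3$, or more than $\log\Delta$ disjoint cycles after removing the high-degree vertices, both detectable in linear time), then the structural bounds force $\Delta(G) > \Delta$ and we report this. Otherwise we obtain $Z$ with $|Z| \le 8\log\Delta$ and $|\delta(Z)| \le 8(\Delta+1)\log\Delta$, so that $G - Z$ is a disjoint union of paths.

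The key idea is that each path component of $G - Z$ can be shortened without changing $\Delta(G)$. A path component of $G-Z$ all of whose internal vertices have degree $2$ in $G$ can be repeatedly contracted using Lemma~\ref{lem:long_induced_path}: whenever such a path has length at least $7$, we contract a length-$7$ subpath of degree-$2$ vertices, preserving $\Delta(G)$. Iterating this, every long path attached to $Z$ is reduced to bounded length. More carefully, I would argue that after exhaustively applying Lemma~\ref{lem:long_induced_path}, every path component of $G - Z$ has length bounded by a function of $\Delta$ (depending only on $|Z|$ and the cut size $|\delta(Z)|$, both $O(\Delta\log\Delta)$), since the endpoints of each path attach to $Z$ through $\delta(Z)$ and the degree-$2$ interior can always be trimmed to length below $7$. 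The resulting graph $G'$ thus has $O(\Delta\log\Delta) \cdot 7 = O(\Delta\log\Delta)$ vertices and satisfies $\Delta(G') = \Delta(G)$.

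Once $G'$ has size $O(\Delta\log\Delta)$, its constraint matrix $M(G')$ has dimensions $O(\Delta\log\Delta) \times O(\Delta\log\Delta)$, and we compute $\Delta(G')$ by a brute-force search over all square submatrices, evaluating each determinant. The number of such submatrices and the cost of each determinant computation are both bounded by $2^{O(\Delta\log\Delta)}$, giving total time $2^{O(\Delta\log\Delta)}$ for this final phase. Combined with the linear-time preprocessing to build $Z$ and perform the contractions, the overall running time is $O(|V(G)| + |E(G)| + 2^{O(\Delta\log\Delta)})$, as claimed.

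The main obstacle I anticipate is the contraction phase: I must verify that Lemma~\ref{lem:long_induced_path} can be applied to reduce \emph{every} path component of $G-Z$ to bounded length while genuinely preserving $\Delta(G)$, and that the bookkeeping (identifying contractible degree-$2$ segments, ensuring the hypotheses of the lemma are met at each step) can be carried out in linear time overall. A subtle point is that the bound on the size of $G'$ must be expressed purely in terms of $\Delta$ — this follows because the number of path components is controlled by $|\delta(Z)|$ and each contracted path has length less than $7$, but I would state this reduction bound explicitly to make the final enumeration bound $2^{O(\Delta\log\Delta)}$ rigorous.
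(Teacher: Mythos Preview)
Your approach matches the paper's almost exactly: apply Lemma~\ref{lem:general_structure}, shrink long degree-$2$ paths via Lemma~\ref{lem:long_induced_path}, then brute-force on the resulting $O(\Delta\log\Delta)$-size graph.

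There is one genuine gap. You assert that ``the endpoints of each path attach to $Z$ through $\delta(Z)$'' and hence that the number of path components of $G-Z$ is controlled by $|\delta(Z)|$. This is false: $G-Z$ can have arbitrarily many path components that are entire connected components of $G$ and send no edge to $Z$. Lemma~\ref{lem:long_induced_path} lets you shorten each one, but it does not let you delete them, so your reduced graph $G'$ need not have $O(\Delta\log\Delta)$ vertices. The paper closes this gap with a one-line observation you omitted: by Lemma~\ref{lemma:TUMpath} every such isolated path component has totally unimodular constraint matrix, and by Lemma~\ref{lemma:components} it contributes a factor of $1$ to $\Delta(G)$, so all of them can simply be deleted in linear time before the contraction phase. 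Once that is done, every remaining component of $G-Z$ does attach to $Z$, the count is bounded by $|\delta(Z)|=O(\Delta\log\Delta)$, and your argument goes through verbatim.
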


\begin{proof}
By Lemma~\ref{lem:general_structure} in time $O(|V(G)|+|E(G)|)$ we either find a set $Z$ as in the lemma or report correctly that $\Delta(G) > \Delta$. 

Assume that we are in the former case. By Lemmas~\ref{lemma:components} and \ref{lemma:TUMpath}, every component of $G-Z$ that is not connected to $Z$ does not contribute to $\Delta(G)$, and can be deleted from $G$ without changing $\Delta(G)$. There are at most $|\delta(Z)| = O(\Delta \log \Delta)$ components of $G - Z$ that send an edge to $Z$. From the proof of Lemma~\ref{lem:general_structure}, we see that each such component is a path whose vertices have all degree at most $2$ in $G$.

By Lemma~\ref{lem:long_induced_path} we can shrink each component of $G-Z$ to a path of bounded length without changing $\Delta(G)$. This reduces $G$ to a graph with $O(\Delta \log \Delta)$ vertices and edges. The maximum subdeterminant of $M(G)$ can now be computed in time $O(2^{O(\Delta \log \Delta)})$, by brute force. The result follows.
\qed \end{proof}



\section{Forests} \label{sec:forests}

In this section, we explore several properties of $\Delta(G)$ for forests. Let $G$ be a forest, and let $M'$ be a minimal witness for $G$. As before, consider the corresponding coloring of the elements of $G$. Our first lemma states that $M'$ is a principal submatrix of $M$.

\begin{lemma} \label{lem:no_monochromatic_element}
No element of $G$ is monochromatic.
\end{lemma}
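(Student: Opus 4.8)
The plan is to argue by contradiction: assume the minimal witness $M'$ has a monochromatic element. Since $M = M(G)$ is symmetric, transposing $M'$ swaps the roles of rows and columns and yields another minimal witness, so I may assume without loss of generality that some element is \emph{red but not cyan} (i.e.\ $R \setminus C \ne \emptyset$, where $R$ and $C$ denote the red and cyan elements). Recall that minimality forces every row and column of $M'$ to contain at least two ones and forbids any fault. Reading off what this means element by element gives two local rules, which I would record first: a red-only \emph{edge} $e = uw$ must have both endpoints cyan (its row restricted to $C$ is $C \cap \{e,u,w\} = C \cap \{u,w\}$ and must have size $\ge 2$), and a red-only \emph{vertex} $v$ must have at least two cyan incident edges (its row is $C \cap \delta(v)$). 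The transposed statements hold for cyan-only elements.

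A useful warm-up, and the prototype for the fault I am ultimately after, is that leaves are uncolored. Indeed, if $v$ is a leaf with pendant edge $e = vw$ and $v$ is colored, then the size-$\ge 2$ condition forces both $v$ and $e$ to be bichromatic, and one checks that the rows indexed by $v$ and by $e$ satisfy $C \cap \{v,e\} \subseteq C \cap \{e,v,w\}$ with the two sets differing by at most one element, i.e.\ a fault, contradicting minimality. Hence every leaf is uncolored. This already disposes of monochromatic leaves; the difficulty is interior elements.

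For the general case I would use a nonzero term of the determinant. Since $\det M' \ne 0$, there is a bijection $\sigma \colon R \to C$ with $\alpha$ incident to $\sigma(\alpha)$ for all $\alpha \in R$. I view $\sigma$ as a functional digraph $D$ on $R \cup C$: bichromatic elements have in- and out-degree one, red-only elements are sources, and cyan-only elements are sinks, so $D$ splits into cycles inside $R \cap C$ together with directed paths running from a red-only element to a cyan-only element. Every off-diagonal arc of $D$ is a vertex–edge incidence of $G$, so a cycle of $D$ using an off-diagonal arc would trace a closed alternating walk, hence a cycle, in $G$; since $G$ is a forest this is impossible, and every cycle of $D$ is therefore a single self-loop at a bichromatic element. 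Consequently a monochromatic element, which cannot carry a self-loop, must be the endpoint of a genuine directed path of $D$, which is a simple alternating path $P$ in the forest joining a red-only element to a cyan-only element, with all interior elements bichromatic.

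It then remains to exploit the endpoint of $P$ to produce a fault, and this is where I expect the real work to lie. At a cyan-only endpoint the local rules supply an \emph{extra} red neighbour beyond the one used by $P$ (a second red edge at a cyan-only vertex, or the second red endpoint of a cyan-only edge), and the aim is to combine the two columns meeting at the endpoint into a fault, using acyclicity to control the remaining entries. The main obstacle is precisely this local fault extraction: a naive induction that simply deletes an uncolored leaf together with its pendant edge stalls, because that edge may itself be bichromatic (which is forced when its far endpoint carries at least two further cyan and two further red edges), and deleting it changes $\Delta(G)$. The acyclicity of $G$ — which forbids the bad cycles in $D$ and keeps $P$ simple and finite — is what I expect to convert this extra neighbour at a monochromatic endpoint into a genuine fault, completing the contradiction.
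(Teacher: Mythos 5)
Your preliminary structure is correct: the local consequences of minimality, the fact that leaves are uncolored, and the decomposition of a nonzero permutation term $\sigma$ into self-loops/cycles inside $R \cap C$ plus directed paths from red-only to cyan-only elements, so that every monochromatic element sits at the end of an alternating path $P$ in the forest. (One small slip: a $2$-cycle $v \to e \to v$ of $D$ uses two off-diagonal arcs but does not trace a cycle of $G$, so cycles of $D$ need not be self-loops; this is harmless, since sources and sinks never lie on cycles of a functional digraph anyway.) However, the proof stops exactly where the lemma's content begins: you never derive the contradiction. You state that the extra red neighbour at the cyan-only endpoint of $P$ should yield a fault and that "this is where I expect the real work to lie," and you yourself record that the naive attempts stall. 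Without that step there is no proof.

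Moreover, the target you set yourself --- extracting a \emph{fault} --- is likely unreachable: a minimal witness containing monochromatic elements need not contain any fault. For instance, the vertex--edge incidence matrix of an odd cycle is a minimal witness in which every element is monochromatic, yet no two rows or columns are nested. So the contradiction must come from the forest structure used globally, not from a local fault at the end of $P$. This is precisely what the paper does, by a quite different mechanism: it splits the forest at the monochromatic element $\alpha$ into subforests meeting only at $\alpha$, which puts $M'$ into a block form with an extra row $a^\intercal$ for $\alpha$ over blocks $M'_i$ indexed by $R_i \times C_i$; a dimension count gives $r_i \le c_i \le r_i + 1$ with $c_i = r_i + 1$ for exactly one index, and a Laplace expansion then shows that deleting the row of $\alpha$ together with the column of the unique element $\beta$ of $C_1$ incident to $\alpha$ preserves $|\det M'|$, contradicting minimality of the witness directly --- no fault is ever exhibited. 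To salvage your route you would need an analogous global counting argument along the path $P$ (or an induction on the forest), not a local two-row comparison.
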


\begin{proof} 
Toward a contradiction, assume that some element $\alpha$ is monochromatic, say red. 

We claim that, for some $k \geq 2$, we can partition the red elements into $k+1$ sets, $\{\alpha\}$, $R_1$, \ldots, $R_k$, and the cyan elements into $k$ sets $C_1$, \ldots, $C_k$ such that (i) no element of $R_i$ is incident to an element of $C_j$ for $i \neq j$, and (ii) $\alpha$ is incident to at most one element in each $C_j$. In other words, we claim that, after permuting the rows and columns, $M'$ takes the form
$$
M' = 
\begin{bmatrix}
a_1^\intercal & a_2^\intercal & \cdots & a_k^\intercal\\
M'_1          & 0             & \cdots & 0\\
0             &M'_2           & \cdots & 0\\
\vdots        &\vdots         & \ddots & \vdots\\
0             &0              & \cdots & M'_k
\end{bmatrix}
$$
where each $a_j$ is a 0/1 vector of the appropriate dimension, with at most one $1$.

To prove the claim, we consider separately the two cases that can occur. 

First, assume that $\alpha = e = v_1v_2$ is an edge. By minimality of $M'$, both $v_1$ and $v_2$ are cyan. In this case, we take $k = 2$ and consider an arbitrary decomposition of forest $G$ as the union of $(\{v_1,v_2\},\{e\})$ and two vertex-disjoint forests $G_1$ and $G_2$, the first containing $v_1$ and the second $v_2$. Then, for $i = 1, 2$, we let $R_i$ (resp.\ $C_i$) be the set of red (resp.\ cyan) elements in $G_i$. It remains to check that (i) and (ii) are satisfied. Clearly, no element of $R_i$ is incident to an element of $C_j$ for $i \neq j$. Moreover, $e$ is incident to exactly one element of $C_1$ (namely, $v_1$) and exactly one element of $C_2$ (namely, $v_2$).

Second, assume that $\alpha = v$ is a vertex. Let $e_1$, \ldots, $e_k$ denote the edges incident to $v$ (by minimality of $M'$, at least two of these edges are cyan, which implies in particular that $k \geq 2$). In this case, we decompose forest $G$ as the union of $k$ forests $G_1$, \ldots, $G_k$ all containing $v$ but otherwise vertex-disjoint, in such a way that each $G_i$ contains edge $e_i$, for $i = 1, \ldots, k$. Then, for $i = 1, \ldots, k$, we let $R_i$ denote the red elements distinct from $v$ in $G_i$ and $C_i$ denote the cyan elements in $G_i$. Again, properties (i) and (ii) are satisfied. This concludes the proof of the claim.

Now, for $i = 1, \ldots, k$, let $r_i := |R_i|$ and $c_i := |C_i|$. We have $r_i \leq c_i$ for each $i$ since the $r_i$ rows of $M'$ for the elements of $R_i$ are linearly independent vectors spanning a space of dimension at most $c_i$. Also, we have $c_i \leq r_i+1$ for each $i$ since the $c_i$ columns of $M'$ for the elements of $C_i$ are linearly independent vectors spanning a space of dimension at most $r_i + 1$. Hence, $c_i - r_i \in \{0,1\}$ for $i = 1, \ldots, k$. Since $M'$ is square, we have
$$
1 + \sum_{i=1}^k r_i = \sum_{i=1}^k c_i \implies 
\sum_{i=1}^k \underbrace{(c_i - r_i)}_{\in \{0,1\}} = 1\,.
$$
After permuting the indices if necessary, we may assume that $c_1 = 1 + r_1$ and $c_i = r_i$ for $i \geq 2$. 

We find
$$
\det M' = \det  
\begin{bmatrix}
a_1^\intercal\\
M'_1
\end{bmatrix}
\det M'_2 \cdots \det M'_k
$$
By choice of $M'$, none of these determinants is zero. In particular, $\alpha$ is incident to exactly one element of $C_1$, say $\beta$. Notice that the first determinant equals (in absolute value) the determinant of the square submatrix associated to $R_1$ and $C_1 \setminus \{\beta\}$. This means that we can uncolor element $\alpha$ and remove the color cyan from $\beta$ and find the same determinant, which contradicts the minimality of $M'$.
\qed \end{proof}


Resuming our discussion of the minimal witness $M'$, by  Lemma~\ref{lem:no_monochromatic_element} we have
$$
M' = \begin{bmatrix}
I & B'\\
(B')^\intercal & I\\
\end{bmatrix}\,,
$$
where $B'$ is the submatrix\footnote{Notice that $B'$ is not always a true incidence matrix since there might be bichromatic edges $e$ such that only one endpoint of $e$ is bichromatic.} of the incidence matrix $B$ with rows indexed by bichromatic edges and columns indexed by bichromatic vertices. 

We use a Schur complement to express $\det M'$ in terms of a smaller matrix whose rows and columns are indexed by the bichromatic vertices. 
We find
$$
\det M' = 
\det \begin{bmatrix}
I - B' (B')^\intercal& 0\\
(B')^\intercal & I\\
\end{bmatrix}
= \det (I - B' (B')^\intercal)\,.
$$

Let $G'$ denote the subgraph of $G$ whose edges are the bichromatic edges. 
Let $G''$ denote the subgraph of $G'$ induced by the bichromatic vertices.
We denote by $\tilde{L}(G',G'')$ the matrix $B' (B')^\intercal - I$. Note that this matrix coincides with the matrix obtained from the adjacency matrix $A(G'')$ of $G''$ by changing the diagonal coefficient for each bichromatic vertex $v$ to $d_{G'}(v)-1$. In a formula,
$$
\tilde{L}(G',G'') = B' (B')^\intercal - I = A(G'') + \diag (d_{G'}(v_1)-1,\ldots,d_{G'}(v_k)-1)
$$
where $v_1, \ldots, v_k$ denote the bichromatic vertices.



From the discussion above, we directly obtain the following result. The final part of the theorem follows from the minimality of $M'$.

\begin{theorem} \label{thm:Delta_for_forests}
Let $G$ be any forest (with at least one vertex). Then
$$
\Delta(G) = \max \{|\det \tilde{L}(G',G'')| : G'' \stackrel{\mathrm{ind}}{\subseteq} G' \subseteq G\}
$$
where $G'$ is a subforest of $G$ and $G''$ is an induced subforest of $G'$. Moreover, in the formula above we can restrict to choices of subgraphs such that every edge of $G'$ has at least one endpoint in $G''$, and $d_{G'}(v) \geq 2$ for all $v \in V(G'')$ (provided that every component of $G$ has at least three vertices).
\end{theorem}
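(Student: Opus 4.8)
The plan is to prove the displayed identity by two opposite inequalities, each of which mostly repackages the discussion preceding the statement. For $\Delta(G) \le \max\{\cdots\}$ I would take a minimal witness $M'$ with $|\det M'| = \Delta(G)$; by Lemma~\ref{lem:no_monochromatic_element} it has no monochromatic element, hence it has the block form $\left[\begin{smallmatrix} I & B'\\ (B')^\intercal & I\end{smallmatrix}\right]$, and the Schur-complement computation already carried out gives $|\det M'| = |\det \tilde{L}(G',G'')|$, where $G'$ is the subforest of bichromatic edges and $G''$ the subgraph induced on the bichromatic vertices. Since $G'' \stackrel{\mathrm{ind}}{\subseteq} G' \subseteq G$, this pair is one of the competitors in the maximum, which proves $\le$. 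For the reverse inequality I would run the same computation backwards: given any pair $G'' \stackrel{\mathrm{ind}}{\subseteq} G' \subseteq G$, color the vertices of $G''$ and the edges of $G'$ bichromatic and read off the principal submatrix $M'$ of $M(G)$ indexed by $V(G'') \cup E(G')$. It has exactly the block form above, and the Schur complement yields $|\det M'| = |\det \tilde{L}(G',G'')|$; as $M'$ is an honest square submatrix of $M(G)$ its determinant is at most $\Delta(G)$ in absolute value, giving $\max\{\cdots\} \le \Delta(G)$.

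For the \emph{moreover} clause I would extract the two structural constraints from the minimality of the witness realizing the first inequality, using the no-fault and at-least-two-ones observations recorded before Lemma~\ref{lem:technical}. First, if a bichromatic edge $e = uv$ had no bichromatic endpoint, then the row of $M'$ indexed by $e$ would contain a single $1$ (in the column of $e$), contradicting minimality; hence every edge of $G'$ meets $V(G'')$. Second, fix a bichromatic vertex $v$. If $d_{G'}(v) = 0$ the row of $v$ again has a single $1$; if $d_{G'}(v) = 1$, with $e = vw$ the unique bichromatic edge at $v$, then the column of $v$ equals $\{v,e\}$ while the column of $e$ equals $\{v,e\}$ or $\{v,e,w\}$, so the two columns are nested and differ in at most the single entry $w$, i.e.\ they form a fault. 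Either way minimality is violated, so $d_{G'}(v) \ge 2$.

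It remains to pin down the role of the hypothesis that every component has at least three vertices, and this is where I expect the only genuine subtlety. The degree bound $d_{G'}(v) \ge 2$ is vacuous exactly when $V(G'') = \emptyset$, which is forced when the witness realizing $\Delta(G)$ is degenerate; this happens precisely when $\Delta(G) = 1$. When $\Delta(G) \ge 2$, the empty matrix (determinant $1$) is not a witness, so any minimal witness is nonempty, necessarily contains a bichromatic edge and hence a bichromatic vertex, and the fault argument above delivers both restrictions with $G''$ genuinely nonempty. When $\Delta(G) = 1$, however, I would construct a nonempty competitor by hand: pick any component, which by hypothesis is a tree on at least three vertices and therefore has a vertex $v$ of degree at least $2$; taking $G'$ to be two edges at $v$ together with their endpoints and $G'' = \{v\}$ gives $\tilde{L}(G',G'') = [\,1\,]$, so $|\det \tilde{L}| = 1 = \Delta(G)$ while $d_{G'}(v) = 2$ and every edge of $G'$ meets $G''$. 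A single vertex or single edge admits no such $v$, which is exactly the pathology the three-vertices hypothesis rules out; so the main obstacle is the bookkeeping for this low-determinant degenerate case rather than the clean $\Delta(G) \ge 2$ situation.
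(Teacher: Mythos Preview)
Your proof is correct and follows the same line as the paper's own argument, which simply states that the identity is immediate from the preceding discussion and that the restrictions follow from minimality of the witness. You spell out both inequalities explicitly (the paper leaves the $\ge$ direction implicit) and give a clean fault-based derivation of the two structural constraints; your careful treatment of the degenerate case $\Delta(G)=1$, explaining why the three-vertex hypothesis is exactly what is needed to exhibit a nonempty restricted pair achieving the value~$1$, is more detailed than anything in the paper but entirely consistent with its intent.
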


We conclude this section with a lemma that bounds $\Delta(G)$ in terms of the degree sequence of $G$. The lower bound is similar to Lemma~\ref{lemma:LB} in spirit, except that here we can be more explicit using the fact that $G$ is bipartite.

\begin{lemma} \label{lem:LB-UB}
Let $G$ be a forest, and let $d_1 \geq d_2 \geq \cdots \geq d_n$ denote the degrees of the vertices of $G$. For $d \in \mathbb{N}$, let $n_d$ denote the number of vertices of degree at least $d$ in $G$. Then we have
$$
\sqrt{\prod_{i=1}^{n_2} (d_i-1)} \leq \Delta(G) \leq \left(\frac{\sum_{i=1}^{n_2} d_i}{n_2}\right)^{n_2}\,.
$$
\end{lemma}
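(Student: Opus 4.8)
The plan is to prove the two bounds separately, both leveraging the characterization from Theorem~\ref{thm:Delta_for_forests}, which tells us that $\Delta(G) = \max |\det \tilde L(G', G'')|$ over subforests $G'' \stackrel{\mathrm{ind}}{\subseteq} G' \subseteq G$, and that we may restrict to the case where every edge of $G'$ touches $G''$ and $d_{G'}(v) \ge 2$ for every $v \in V(G'')$. Since $\tilde L(G',G'') = A(G'') + \diag(d_{G'}(v_i) - 1)$, the key object is a symmetric matrix whose diagonal entries are the "reduced degrees'' $d_{G'}(v)-1 \ge 1$ and whose off-diagonal part is the adjacency matrix of the bipartite (forest) graph $G''$.

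For the \textbf{lower bound}, I would construct an explicit feasible choice of $(G', G'')$. The right target is $\prod_{i=1}^{n_2}(d_i-1)$ under the square root, so I want a configuration whose determinant is (at least) the square root of this product, or a configuration realizing a near-pencil-type block structure. The natural move, mirroring Lemma~\ref{lemma:LB}, is to take $G''$ to be an independent set of vertices (so $A(G'')=0$ and $\tilde L$ is diagonal) consisting of high-degree vertices, with $G'$ supplying the incident edges. Concretely, in the bipartite forest one can pick an independent set $S$ of vertices of large degree such that the diagonal matrix $\diag(d_{G'}(v)-1)$ over $v \in S$ has determinant $\prod_{v \in S}(d_{G'}(v)-1)$. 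The subtlety is that choosing vertices $v_1, \dots, v_{n_2}$ independently may double-count edges (one edge $uv$ contributing to the reduced degree of both $u$ and $v$), so each edge can be assigned to only one endpoint; this is exactly why the product gets "split'' and the square root appears. I would formalize this by two-coloring the forest $G$ (it is bipartite) and observing that one of the two color classes captures at least half of the relevant degree-product in the exponent, yielding the $\sqrt{\;\cdot\;}$ factor.

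For the \textbf{upper bound}, the plan is to bound $|\det \tilde L(G',G'')|$ by Hadamard's inequality or the arithmetic-mean/geometric-mean relation on the matrix $\tilde L$ restricted to the $k := |V(G'')| \le n_2$ bichromatic vertices. Each row of $\tilde L$ corresponding to a vertex $v$ has $\ell_1$-norm equal to $(d_{G'}(v) - 1) + (\text{number of neighbors of } v \text{ in } G'') \le d_{G'}(v) - 1 + d_{G''}(v) \le 2 d_{G}(v) - 1$, and more crudely each row sum is controlled by the degree $d_i$. Applying Hadamard's inequality in the form $|\det \tilde L| \le \prod \|(\text{row } i)\|$ and then passing from Euclidean to $\ell_1$ norms, or applying the bound $|\det| \le \big(\tfrac{1}{k}\mathrm{tr}(\tilde L \tilde L^\intercal)\big)^{k/2}$ type inequality, should produce something of the form $\left(\frac{\sum d_i}{n_2}\right)^{n_2}$ after using the AM-GM inequality to convert the product of row-norms into a power of their average, and bounding $k \le n_2$. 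The main point is to relate each row norm to the degree $d_i$ and then average.

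The \textbf{main obstacle} I anticipate is the upper bound: getting the constant and the exponent exactly right so that the final expression is $\left(\frac{\sum_{i=1}^{n_2} d_i}{n_2}\right)^{n_2}$ rather than a looser bound with extra factors of $2$ or a different exponent. The difficulty is twofold: first, the number of bichromatic vertices $k$ need not equal $n_2$, so I must argue monotonically that using at most $n_2$ vertices of the largest degrees is the worst case (this requires care since fewer but higher-degree vertices might conceivably do better, and one must show the stated bound dominates uniformly); second, converting Hadamard's product-of-norms into the clean "average degree to the power $n_2$'' form requires a correctly applied AM-GM step together with the observation that the relevant row norms are bounded by the $d_i$ themselves, absorbing the adjacency contribution without an extra factor. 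I expect the lower bound to be comparatively routine once the edge-assignment/two-coloring idea is set up, whereas the upper bound will demand the bulk of the careful estimation.
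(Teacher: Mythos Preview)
Your lower-bound plan is essentially the paper's: two-color the vertices of degree $\ge 2$ into stable sets $S_1,S_2$, take $G'=G$ and $G''=G[S_i]$ so that $\tilde L(G,G[S_i])$ is diagonal with entries $d_G(v)-1$, and observe
\[
\det \tilde L(G,G[S_1])\cdot \det \tilde L(G,G[S_2])=\prod_{i=1}^{n_2}(d_i-1),
\]
so one factor is at least the square root. Note that your worry about ``double-counting edges'' and assigning each edge to one endpoint is unnecessary: with $G'=G$, every vertex $v\in S_i$ gets its full degree $d_G(v)$ on the diagonal regardless of where its neighbors lie; the split into $S_1$ and $S_2$ is only there to make each $G[S_i]$ edgeless (hence $\tilde L$ diagonal), not to partition edges.

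For the upper bound your Hadamard approach has a real gap, and it is exactly the one you flag. Row $i$ of $\tilde L(G',G'')$ has $\ell_1$-norm $(d_{G'}(v_i)-1)+d_{G''}(v_i)\le 2d_G(v_i)-1$, so Hadamard followed by AM--GM yields at best
\[
|\det \tilde L|\le \prod_{i=1}^{k}(2d_G(v_i)-1)\le\left(\frac{2\sum_{i=1}^{k} d_G(v_i)}{k}-1\right)^{k},
\]
which is off from the target by a factor of roughly $2^{n_2}$; there is no way to ``absorb the adjacency contribution without an extra factor'' via row norms, because the off-diagonal ones genuinely add to the norm.

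The paper instead uses a spectral argument whose key ingredient you are missing: since $d_{G'}(v)\ge d_{G''}(v)$ for all $v\in V(G'')$,
\[
\tilde L(G',G'')+I\;\succcurlyeq\;A(G'')+\diag\bigl(d_{G''}(v_1),\ldots,d_{G''}(v_k)\bigr)\;\succcurlyeq\;0,
\]
so every eigenvalue of $\tilde L$ satisfies $\lambda_i\ge -1$. Hence the negative eigenvalues contribute at most $1$ in absolute value to the determinant, and one can bound $|\det \tilde L|\le\prod_{i\le\ell}\lambda_i$ where $\lambda_1\ge\cdots\ge\lambda_\ell\ge 0>\lambda_{\ell+1}$. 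Applying AM--GM to just the nonnegative eigenvalues and then controlling $\sum_{i\le\ell}\lambda_i$ by the trace $\sum_i(d_{G'}(v_i)-1)$ (using $\lambda_i\ge -1$ once more to handle the tail) produces $\bigl(\tfrac{\sum d_{G'}(v_i)}{\ell}-1\bigr)^{\ell}$, and a short calculus step pushes this to $\bigl(\tfrac{\sum d_{G'}(v_i)}{k}\bigr)^{k}$ and then to the stated bound at $p=n_2$. The PSD observation is what removes the unwanted factor of $2$ per row that Hadamard cannot avoid.
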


\begin{proof}
To prove the lower bound, consider any partition of the set of vertices of degree at least $2$ into two stable sets $S_1$ and $S_2$ (that is, any bicoloring of the corresponding induced subgraph of $G$). Notice that
$
\det \tilde{L}(G,G[S_1]) \cdot \det \tilde{L}(G,G[S_2]) = \prod_{i=1}^{n_2} (d_i-1)
$.
Hence, there exists some index $i \in [2]$ such that 
$
\det \tilde{L}(G,G[S_i]) \geq \sqrt{\prod_{i=1}^{n_2} (d_i-1)}
$.
We conclude by applying Theorem~\ref{thm:Delta_for_forests} with $G' := G$ and $G'' := G[S_i]$.

To prove the upper bound, let $G'$ and $G''$ be any choice of subgraphs of $G$ that achieve the maximum subdeterminant, see Theorem~\ref{thm:Delta_for_forests}. Thus, $\Delta(G) = |\det \tilde{L}(G',G'')|$. Again, let $v_1$, \ldots, $v_k$ denote the vertices of $G''$. Since $d_{G'}(v_i) \geq d_{G''}(v_i)$ for all $i \in [k]$, we have
%
$$
\tilde{L}(G',G'') + I 
\succcurlyeq
A(G'') + \diag( d_{G''}(v_1),\ldots,d_{G''}(v_k)) 
\succcurlyeq 0\,.
$$
This implies that the eigenvalues $\lambda_1 \geq \lambda_2 \geq \cdots \geq \lambda_k$ of $\tilde{L}(G',G'')$ are all larger or equal to $-1$. Let $\ell \in [k]$ be the last index for which $\lambda_\ell \geq 0$. We have
\begin{align*}\label{eq:max}
    \Delta(G)&=|\det\,\tilde{L}(G',G'')|=\left|\prod_{i=1}^k\lambda_i\right|\le\prod_{i=1}^\ell\lambda_i\le\left(\frac{\sum_{i=1}^\ell\lambda_i}{\ell}\right)^\ell\\
    &\le\left(\frac{(k-\ell)+\sum_{i=1}^k\lambda_i}{\ell}\right)^\ell 
    =\left(\frac{\sum_{i=1}^kd_{G'}(v_i)}{\ell}-1\right)^\ell\\
    &\stackrel{(\star)}{\le}\left(\frac{\sum_{i=1}^kd_{G'}(v_i)}{k}\right)^k \le \max\left\{\left.\left(\frac{\sum_{i=1}^pd_i}{p}\right)^p\right|p\in[n],\ p \le n_2\right\}.
\end{align*}
Above, we use the AM-GM-inequality, the fact that the sum of the eigenvalues of $\tilde{L}(G',G'')$ equals its trace, $G''\subseteq G'\subseteq G$, and $d_{G'}(v_i)\ge2$ for all $i \in [k]$. It remains to show that $(\star)$ holds and that $p = n_2$ achieves the maximum in our final upper bound.

In order to prove $(\star)$, let $\bar{d} :=\frac{\sum_{i=1}^kd_{G'}(v_i)}{k}$ denote the average number of edges of $G'$ incident to a vertex of $G''$ and $k:=(1+\varepsilon)\ell$ for some $\varepsilon \ge 0$. Without loss of generality, we may assume $k > \ell$, i.e., $\varepsilon > 0$. By minimality of $M'$, we have $\bar{d} \ge 2$, see Theorem~\ref{thm:Delta_for_forests}.

Observe that $(\star)$ holds if and only if $(1+\varepsilon-\bar{d}^{-1})^{\frac1\varepsilon} \le \bar{d}$. In case $\bar{d} \ge \mathrm{e}$ we get $(1+\varepsilon-\bar{d}^{-1})^{\frac1\varepsilon} \le (1+\varepsilon)^{\frac1\varepsilon} \le \mathrm{e} \le \bar{d}$. In case
$2 \le \bar{d} < \mathrm{e}$, we get $\left(1+\varepsilon-\bar{d}^{-1}\right)^{\frac1\varepsilon}\le\left(1+\varepsilon-\mathrm{e}^{-1}\right)^{\frac{1}{\varepsilon}} \le 2 \le \bar{d}$, where the second to last inequality holds since $2^\varepsilon - \varepsilon \ge \frac{1 + \ln \ln 2}{\ln 2} \ge 1 - \mathrm{e}^{-1}$ for all $\varepsilon > 0$.

Finally let $1\le p < n_2$.
Define $x:=\sum_{i=1}^pd_i$ and observe that $x\ge2p$.
Then, 
\begin{align*}
    \frac{\left(\frac{x+d_{p+1}}{p+1}\right)^{p+1}}{\left(\frac{x}{p}\right)^p}&=\left(\frac{p}{p+1}\right)^p\cdot\left(\frac{(x+d_{p+1})^{p+1}}{(p+1)x^p}\right)\ge \mathrm{e}^{-1}\left(\frac{(x+2)^{p+1}}{(p+1)x^p}\right)\\
    &\ge \mathrm{e}^{-1}\left(\frac{x^{p+1}+2(p+1)x^p}{(p+1)x^p}\right)\ge \mathrm{e}^{-1}\left(\frac{4p+2}{p+1}\right)\ge 1.
\end{align*}
We conclude that $p = n_2$ maximizes the upper bound.
\qed \end{proof}
%
%

%
%

We conclude this section by comparing the bounds we just proved on $\Delta(G)$.
Taking logarithms and assuming that $n_2 = O(n_3)$ (this is without loss of generality by Lemma~\ref{lem:long_induced_path}), we see that
$$
\log \left(\frac{\sum_{i=1}^{n_2} d_i}{n_2}\right)^{n_2} \leq 
O\left( \frac{\log d_\mathrm{max}}{\log d_\mathrm{min}}\right) \log \sqrt{\prod_{i=1}^{n_2} (d_i - 1)}\,,
$$
where $d_\mathrm{max}$ is the maximum degree and $d_\mathrm{min}$ is the minimum degree among all vertices of degree at least $3$. Hence, in general, the bounds are quasipolynomially related. If the degree distribution is not too spread out, they are even polynomially related. 


\section{Conclusions and future work} \label{sec:conclusion}

The total matching problem has a natural formulation as an IP. In this paper we investigate instances such that all subdeterminants of the constraint matrix are bounded (in absolute value) by a constant $\Delta \in \mathbb{Z}_{\ge 1}$.
Our main contributions include linear time algorithms for solving, as well as recognizing, such instances. 
Our algorithms rely on results on the structure of graphs $G$ with bounded $\Delta(G)$, which we establish here. Moreover, we prove further structural results about $\Delta(G)$ where $G$ is a forest.

It is not known in general how to check in polynomial time whether an integer matrix has all its subdeterminants bounded by a constant. In particular, no polynomial time algorithm appears to be known to test whether a given integer matrix is \emph{totally bimodular}, that is, has all its subdeterminants bounded by $2$ (see for instance~\cite{artmann_2017} for more information).

The main question that we leave for future work is that of determining the complexity of computing $\Delta(G)$. We believe that this is as difficult as computing the maximum subdeterminant of the incidence matrix of a graph. This last problem amounts to computing the odd cycle packing number $\ocp(G)$ graph $G$. This last invariant is NP-hard to compute, and even NP-hard to approximate~\cite{kawarabayashi_2010}. 

The constraint matrix $M(G)$ of the total matching IP contains the incidence matrix of $G$ as a submatrix, hence $\Delta(G)$ is at least $2^{\ocp(G)}$. However, for most graphs $\Delta(G)$ seems to be dominated by other structural aspects of the graph, in particular its degree sequence. 

Note that determining the complexity of computing $\Delta(G)$ remains open, even in the case where $G$ is a forest. Even though the formula given in Theorem~\ref{thm:Delta_for_forests} slightly simplifies the problem, it remains not trivial to understand precisely which edges and vertices should be selected in the corresponding bicoloring. For instance, there are examples showing that insisting to bicolor certain (high-degree) vertices can be detrimental for the subdeterminant, see Figure~\ref{fig:example}.

\tikzset{ 
	vertex/.style={
		draw,
		circle,
		inner sep=0,
        fill = black,
		minimum size=.15cm}
}
\begin{figure}[ht]
\centering
\begin{tikzpicture}
    \node[vertex, label=above right:$v$] (v) at (0,0) {};
    \node[vertex] (v1) at (0,1) {};
    \node[vertex] (v2) at (1,0) {};
    \node[vertex] (v3) at (-1,0) {};
    \node[vertex] (v11) at (-.5,1) {};
    \node[vertex] (v12) at (.5,1) {};
    \node[vertex] (v21) at (1,.5) {};
    \node[vertex] (v22) at (1.5,0) {};
    \node[vertex] (v31) at (-1,.5) {};
    \node[vertex] (v32) at (-1.5,0) {};

    \draw (v) -- (v1);
    \draw (v) -- (v2);
    \draw (v) -- (v3);
    \draw (v1) -- (v11);
    \draw (v1) -- (v12);
    \draw (v2) -- (v21);
    \draw (v2) -- (v22);
    \draw (v3) -- (v31);
    \draw (v3) -- (v32);
\end{tikzpicture}
\caption{A graph $G$ with $\Delta(G)=8$. In colorings where $v$ is bicolored, the  maximum subdeterminant is $4$. Hence it is optimal to leave $v$ uncolored.}
\label{fig:example}
\end{figure}
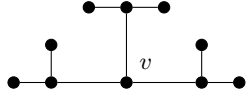


When we consider general graphs, the picture becomes even less clear. Already for small graphs on up to six vertices, there is a large number of examples where certain elements have to be monochromatic in order to maximize the subdeterminant. 
We note that there appears to be a strong connection between principal subdeterminants and spectral graph theory (as used in the proof of Lemma~\ref{lem:LB-UB}), in particular for forests and regular graphs.

\bibliographystyle{splncs04}
\bibliography{references}

\begin{thebibliography}{10}
\providecommand{\url}[1]{\texttt{#1}}
\providecommand{\urlprefix}{URL }
\providecommand{\doi}[1]{https://doi.org/#1}

\bibitem{alavi_1977}
Alavi, Y., Behzad, M., Lesniak-Foster, L.M., Nordhaus, E.: Total matchings and
  total coverings of graphs. Journal of Graph Theory  \textbf{1}(2),  135--140
  (1977)

\bibitem{artmann_2016}
Artmann, S., Eisenbrand, F., Glanzer, C., Oertel, T., Vempala, S., Weismantel,
  R.: A note on non-degenerate integer programs with small sub-determinants.
  Operations Research Letters  \textbf{44}(5),  635--639 (2016)

\bibitem{artmann_2017}
Artmann, S., Weismantel, R., Zenklusen, R.: {A Strongly Polynomial Algorithm
  for Bimodular Integer Linear Programming}. In: Proceedings of the 49th Annual
  ACM SIGACT Symposium on Theory of Computing. pp. 1206--1219. STOC 2017,
  Association for Computing Machinery (2017)

\bibitem{Polyhedra}
Ferrarini, L., Gualandi, S.: Total coloring and total matching: Polyhedra and
  facets. European Journal of Operational Research  \textbf{303}(1),  129--142
  (2022)

\bibitem{fiorini_2022}
Fiorini, S., Joret, G., Weltge, S., Yuditsky, Y.: {Integer programs with
  bounded subdeterminants and two nonzeros per row}. In: 2021 IEEE 62nd Annual
  Symposium on Foundations of Computer Science (FOCS). pp. 13--24 (2022)

\bibitem{glanzer_2022}
Glanzer, C., Stallknecht, I., Weismantel, R.: Notes on $\{$a, b, c$\}$-modular
  matrices. Vietnam Journal of Mathematics  \textbf{50}(2),  469--485 (2022)

\bibitem{gribanov_2022}
Gribanov, D., Shumilov, I., Malyshev, D., Pardalos, P.: {On $\Delta$-modular
  integer linear problems in the canonical form and equivalent problems}.
  Journal of Global Optimization pp. 1--61 (2022)

\bibitem{gribanov_2016b}
Gribanov, D.V., Veselov, S.I.: On integer programming with bounded
  determinants. Optimization Letters  \textbf{10},  1169--1177 (2016)

\bibitem{kawarabayashi_2010}
Kawarabayashi, K.i., Reed, B.: Odd cycle packing. In: Proceedings of the
  forty-second ACM symposium on Theory of computing. pp. 695--704 (2010)

\bibitem{leidner_2012}
Leidner, M.E.: A study of the total coloring of graphs. Ph.D. thesis,
  University of Louisville (2012)

\bibitem{manlove_1999}
Manlove, D.F.: On the algorithmic complexity of twelve covering and
  independence parameters of graphs. Discrete Applied Mathematics
  \textbf{91}(1-3),  155--175 (1999)

\bibitem{naegele_2023}
N{\"a}gele, M., N{\"o}bel, C., Santiago, R., Zenklusen, R.: {Advances on
  Strictly $\Delta$-Modular IPs}. In: International Conference on Integer
  Programming and Combinatorial Optimization. pp. 393--407. Springer (2023)

\bibitem{naegele_2022}
N{\"a}gele, M., Santiago, R., Zenklusen, R.: {Congruency-constrained TU
  problems beyond the bimodular case}. In: Proceedings of the 2022 Annual
  ACM-SIAM Symposium on Discrete Algorithms (SODA). pp. 2743--2790. SIAM (2022)

\bibitem{naegele_2019}
N{\"a}gele, M., Sudakov, B., Zenklusen, R.: Submodular minimization under
  congruency constraints. Combinatorica  \textbf{39}(6),  1351--1386 (2019)

\end{thebibliography}

\end{document}